\documentclass[a4paper]{amsart} 
\usepackage{amssymb} 
\usepackage{tikz-cd}
\usepackage{hyperref}

\title{Auslander-Reiten duality for Grothendieck abelian categories}
\author{Henning Krause}
\address{Henning Krause\\ Fakult\"at f\"ur Mathematik\\
  Universit\"at Bielefeld\\ D-33501 Bielefeld\\ Germany.}
\email{hkrause@math.uni-bielefeld.de}


\theoremstyle{plain}
\newtheorem{thm}{Theorem}[section]
\newtheorem{prop}[thm]{Proposition}
\newtheorem{lem}[thm]{Lemma} 

\newtheorem{cor}[thm]{Corollary}

\theoremstyle{definition}
\newtheorem{defn}[thm]{Definition}

\theoremstyle{remark}
\newtheorem{rem}[thm]{Remark}

\numberwithin{equation}{section}


\hyphenation{Grothen-dieck} 
\hyphenation{Auslan-der} 
\hyphenation{com-mu-ta-tive}


\newcommand{\coind}{\operatorname{coind}}
\newcommand{\Coker}{\operatorname{Coker}}

\newcommand{\eff}{\operatorname{eff}}
\newcommand{\Eff}{\operatorname{Eff}}
\newcommand{\End}{\operatorname{End}}

\newcommand{\Ext}{\operatorname{Ext}}

\newcommand{\fp}{\operatorname{fp}}
\newcommand{\Fp}{\operatorname{Fp}}

\newcommand{\Hom}{\operatorname{Hom}}

\renewcommand{\Im}{\operatorname{Im}}

\newcommand{\Inj}{\operatorname{Inj}}

\newcommand{\Ker}{\operatorname{Ker}}

\newcommand{\Mod}{\operatorname{Mod}}

\newcommand{\oHom}{\operatorname{\overline{Hom}}}

\newcommand{\oMod}{\operatorname{\overline{Mod}}}

\newcommand{\Qcoh}{\operatorname{Qcoh}}

\newcommand{\RHom}{\operatorname{{\mathbf R}Hom}}
\newcommand{\RHOM}{\operatorname{\mathbf{R}\mathcal{H}\!\!\;\mathit{om}}}

\newcommand{\Spec}{\operatorname{Spec}}

\newcommand{\Tr}{\operatorname{Tr}}
\newcommand{\uEnd}{\operatorname{\underline{End}}}
\newcommand{\uHom}{\operatorname{\underline{Hom}}}
\newcommand{\umod}{\operatorname{\underline{mod}}}
\newcommand{\uMod}{\operatorname{\underline{Mod}}}


\newcommand{\Ab}{\mathrm{Ab}}

\newcommand{\op}{\mathrm{op}}


\newcommand{\comp}{\mathop{\circ}}

\newcommand{\Lotimes}{\overset{\mathbf L}{\otimes}}

\newcommand{\lto}{\longrightarrow}

\newcommand{\xto}{\xrightarrow}



\def\A{\mathcal A} 
 
\def\C{\mathcal C}
\def\D{\mathcal D}

\def\bfi{\mathbf i}

\def\bfD{\mathbf D} 
\def\bfK{\mathbf K}

\def\bbX{\mathbb X} 
\def\bbY{\mathbb Y} 
\def\bbZ{\mathbb Z}

\def\a{\alpha}

\def\e{\varepsilon}

\def\g{\gamma}
\def\p{\phi}

\def\s{\sigma}
\def\t{\tau}

\def\Ga{\Gamma}
\def\La{\Lambda}
\def\Si{\Sigma}

\begin{document}

\thanks{April 9, 2016}
\begin{abstract}
  Auslander-Reiten duality for module categories is generalised to
  Grothendieck abelian categories that have a sufficient supply of
  finitely presented objects. It is shown that Auslander-Reiten
  duality amounts to the fact that the functor $\Ext^1(C,-)$ into
  modules over the endomorphism ring of $C$ admits a partially defined
  right adjoint when $C$ is a finitely presented object. This result
  seems to be new even for module categories. For appropriate schemes
  over a field, the connection with Serre duality is discussed.
\end{abstract}

\subjclass[2010]{18E15 (primary), 14F05, 16E30, 18G15}

\maketitle


\section{Introduction}

The cornerstones of Auslander-Reiten duality are the following
formulas for modules over an artin algebra $\La$:
\[D\Ext^1_\La(C,-)\cong\oHom_\La(-,D\Tr C)\quad\text{and}\quad
\Ext^1_\La(-,D\Tr C)\cong D\uHom_\La(C,-).\]
They were established by Auslander and Reiten \cite{AR1975} and later
generalised to modules over arbitrary rings \cite{Au1978}. The crucial
ingredient is the explicit construction of the \emph{Auslander-Reiten
  translate} by taking the dual of the transpose $D\Tr C$ of a finitely
presented module $C$.

There are several options to generalise this. A very elegant approach
due to Bondal and Kapranov \cite{BK1989} uses the notion of a \emph{Serre
functor} for a triangulated category. In particular, this reveals the
close connection between Auslander-Reiten duality and Serre duality.

For abelian categories, Auslander and Reiten established in
\cite{AR1974} a generalisation by introducing the concept of a
\emph{dualising variety}. Further approaches include the work of
Reiten and Van den Bergh for hereditary abelian categories
\cite{RV2002} and that of Lenzing and Zuazua \cite{LZ2004}.
 
The aim of this paper is to establish Auslander-Reiten duality more
generally for Grothendieck abelian categories that have a sufficient
supply of finitely presented objects. We mimick the construction of
the Auslander-Reiten translate by invoking the existence of flat
covers in certain functor categories. In fact, we show that the
Auslander-Reiten translate is the representing object of a specific
functor; so our approach is somewhat similar to Neeman's acccount of
Grothendieck duality via Brown representability \cite{Ne1996}.

Motivated by the general setting of Grothendieck abelian categories,
we obtain a coherent formulation of Auslander-Reiten duality which
seems to be new even for module categories.  For a finitely presented
module we provide an explicit construction of the Auslander-Reiten
translate; it is a modification of the original construction due to
Auslander and Reiten.

For an abelian category $\A$ let $\overline\A$ denote the \emph{stable
  category modulo injectives}, which is obtained from $\A$ by
identifying two morphisms $\p,\p'\colon X\to Y$ if
\[\Ext^1_\A(-,\p)=\Ext^1_\A(-,\p').\] 
When $\A$ has enough injective objects this means $\p-\p'$ factors
through an injective object. We write $\oHom_\A(-,-)$ for the
morphisms in $\overline\A$. Analogously, the \emph{stable category
  modulo projectives} $\underline\A$ is defined.

\begin{thm}\label{th:intro}
  Let $\A$ be a Grothendieck abelian category that is locally finitely
  presented. Fix a finitely presented object $C$ and set
  $\Ga=\End_\A(C)$. Then for every injective $\Ga$-module $I$ there
  exists an object $\t_C(I)$ in $\overline\A$ and a natural
  isomorphism
\begin{equation}\label{eq:repres}
\Hom_\Ga(\Ext^1_\A(C,-),I)\cong\oHom_\A(-,\t_C(I)).
\end{equation}
\end{thm}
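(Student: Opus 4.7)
The plan is to prove representability of the contravariant additive functor
\[F \colon \A^{\op} \to \Ab, \qquad F(X) = \Hom_\Ga(\Ext^1_\A(C, X), I),\]
on the stable category $\overline{\A}^{\op}$, producing the representing object as $\t_C(I)$. I first observe that $F$ descends to $\overline{\A}^{\op}$. By the defining relation of the stable category, two morphisms $\varphi, \varphi'$ in $\A$ are identified in $\overline{\A}$ exactly when $\Ext^1_\A(-, \varphi) = \Ext^1_\A(-, \varphi')$; in particular $\Ext^1_\A(C, \varphi) = \Ext^1_\A(C, \varphi')$, so $F(\varphi) = F(\varphi')$. Since $\Ext^1_\A(C, J) = 0$ for every injective $J \in \A$, the functor $F$ also vanishes on injectives.

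The second step is to record the exactness properties that will drive the representability argument. Because $I$ is injective in $\Mod \Ga$, the functor $\Hom_\Ga(-, I)$ is exact; combined with the long exact sequence for $\Ext^1_\A(C, -)$, this shows that $F$ sends every short exact sequence $0 \to X' \to X \to X'' \to 0$ in $\A$ to an exact three-term sequence $F(X'') \to F(X) \to F(X')$. Using the hypothesis that $C$ is finitely presented and that $\A$ is locally finitely presented, one verifies further that $F$ carries the relevant colimits in $\A$ to the corresponding limits in $\Ab$, so $F$ is a well-behaved half-exact contravariant functor.

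The decisive step is the construction of $\t_C(I)$. Following the introduction's indication that flat covers in functor categories substitute for projective resolutions in this generality, I would work inside a Grothendieck abelian functor category where the Bican--El Bashir--Enochs theorem guarantees existence of flat covers. A natural arena is $\Lex((\fp \A)^{\op}, \Ab)$, which is equivalent to $\A$ via the restricted Yoneda embedding and sits inside the ambient category of all additive functors from $(\fp \A)^{\op}$ to $\Ab$. Dualising the relevant structure against $I$ produces a half-exact contravariant functor whose flat cover yields a candidate representing object; descent to $\overline{\A}$ via the quotient by the ideal of injective-factoring morphisms then delivers the desired $\t_C(I)$.

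The main obstacle is precisely this representability-and-descent step. Since $\overline{\A}$ is not abelian, neither the special adjoint functor theorem nor Brown representability applies directly; one must operate in an abelian proxy and transfer the result carefully. A secondary technical point concerns the naturality of \eqref{eq:repres}: flat covers are unique only up to non-canonical isomorphism, and one must verify that passage to $\overline{\A}$ kills the resulting indeterminacy, leaving an isomorphism that is genuinely natural in $X$ and functorial in $I$.
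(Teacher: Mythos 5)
Your write-up is a plan rather than a proof: the step you yourself flag as ``the main obstacle'' --- constructing the representing object and descending to $\overline\A$ --- is precisely the content of the theorem, and it is left unresolved. The observations you do establish (that $\Hom_\Ga(\Ext^1_\A(C,-),I)$ factors through $\overline\A$, vanishes on injectives, and is half exact) only show that the two sides of \eqref{eq:repres} live in the same place; they do not produce $\t_C(I)$, and since $\overline\A$ is neither abelian nor triangulated, no off-the-shelf representability theorem will do the work for you, as you concede.

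Three concrete ingredients are missing. First, the paper does not take a flat cover of some unspecified ``dualised structure''; it resolves a specific functor, namely the coinduction $\coind_{C}(I)=\Hom_\Ga(\Hom_\A(C,-),I)$ of $I$ along $\{C\}\hookrightarrow\A$. Corollary~\ref{co:flatcover} (Breitsprecher plus Bican--El~Bashir--Enochs) gives a minimal projective presentation
\[0\to\Hom_\A(-,T_2)\to\Hom_\A(-,T_1)\to\Hom_\A(-,T_0)\to\coind_{C}(I)\to 0,\]
and one sets $\t_C(I):=T_2$; your sketch never identifies this functor, so there is nothing definite to cover. Second, the injectivity of $I$ must enter beyond the exactness of $\Hom_\Ga(-,I)$: it forces $T_1$ to be an injective object of $\A$ (Lemma~\ref{le:coind} together with Corollary~\ref{co:domdim}), which is what yields $\Ext^1_\A(-,\t_C(I))\cong\eff\coind_{C}(I)$. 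This is not optional --- the paper points out that for a non-injective $I$ the same construction produces only a natural monomorphism into $\oHom_\A(-,\t_C(I))$, so an argument using injectivity as weakly as yours does cannot close. Third, the passage to $\overline\A$ is not an indeterminacy to be checked away but a substantive identification: Lemma~\ref{le:hom-ext} (after Oort), giving $\oHom_\A(X,X')\cong\Hom(\Ext^1_\A(-,X),\Ext^1_\A(-,X'))$, combined with the right adjoint $\eff$ of the inclusion of effaceable functors (Lemma~\ref{le:eff-adj}). Until you supply these three steps, or genuine substitutes for them, the proposal does not prove the theorem.
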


We refer to Theorem~\ref{th:main} for the proof of this result and
continue with some consequences.

Another version of Auslander-Reiten duality involves the stable
category modulo projectives. We say that $\A$ has \emph{enough
  projective morphisms} if every object $X$ in $\A$ admits an
epimorphism $\pi\colon X' \to X$ such that $\Ext^1_\A(\pi,-)=0$.

\begin{cor}\label{co:intro1}
  For an injective $\Ga$-module $I$ there is a natural
  monomorphism 
\begin{equation}\label{eq:proj-repres}
\Ext^1_\A(-,\t_C(I))\lto  \Hom_\Ga(\uHom_\A(C,-),I)
\end{equation}
which is an isomorphism when $\A$ has enough projective morphisms.
\end{cor}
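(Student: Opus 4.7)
\emph{Construction.} The plan is to use the isomorphism of Theorem~\ref{th:intro} at $X=\t_C(I)$ to produce the $\Ga$-linear map $\psi_I\col\Ext^1_\A(C,\t_C(I))\to I$ corresponding to $\id_{\t_C(I)}$; then to send $\xi\in\Ext^1_\A(X,\t_C(I))$ to the assignment $\bar\p\mapsto\psi_I(\p^*\xi)$ on $\uHom_\A(C,X)$, where $\p^*\xi=\Ext^1_\A(\p,\t_C(I))(\xi)$ is the Yoneda pullback. Because $\bar\p=0$ in $\uHom_\A(C,X)$ means $\Ext^1_\A(\p,-)=0$, this forces $\p^*\xi=0$, so the assignment descends and defines a natural transformation
\[\Phi_X\col\Ext^1_\A(X,\t_C(I))\lto\Hom_\Ga(\uHom_\A(C,X),I).\]

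\emph{Injectivity.} Next I would represent $\xi\in\Ker\Phi_X$ by $0\to\t_C(I)\xto{\iota}E\to X\to 0$. The $\Hom_\A(C,-)$ long exact sequence identifies the image of the connecting map as $\Ker(\iota_*\col\Ext^1_\A(C,\t_C(I))\to\Ext^1_\A(C,E))$, and $\Phi_X(\xi)=0$ amounts to $\psi_I$ vanishing on this kernel. Applying the exact functor $\Hom_\Ga(-,I)$ and using the naturality of Theorem~\ref{th:intro} in its second variable translates this into: $\id_{\t_C(I)}$ lies in the image of $\iota^*\col\oHom_\A(E,\t_C(I))\to\oHom_\A(\t_C(I),\t_C(I))$. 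From any representative $\phi\col E\to\t_C(I)$ with $\phi\iota-\id_{\t_C(I)}=\b\a$ factoring through an injective $J$, injectivity lets $\a$ extend along $\iota$ to $\a'\col E\to J$; then $\phi-\b\a'$ is a genuine retraction of $\iota$, so the sequence splits and $\xi=0$.

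\emph{Surjectivity under the additional hypothesis.} For each $X$, the plan is to pick an epi $\pi\col X'\to X$ with $\Ext^1_\A(\pi,-)=0$ and $K=\Ker\pi$. Applying $\Hom_\A(-,\t_C(I))$ to $0\to K\xto{i}X'\to X\to 0$ and using $\Ext^1_\A(\pi,\t_C(I))=0$ produces a right-exact sequence; a routine check (morphisms through injectives die under both $i^*$ and the connecting map, using enough injectives in $\A$) shows it descends to
\[\oHom_\A(X',\t_C(I))\xto{i^*}\oHom_\A(K,\t_C(I))\xto{\partial}\Ext^1_\A(X,\t_C(I))\lto 0.\]
Dually, I would use the hypothesis to identify $\{\p\col C\to X\mid\Ext^1_\A(\p,-)=0\}$ with $\pi_*\Hom_\A(C,X')$, hence $\uHom_\A(C,X)\cong\Ker(i_*\col\Ext^1_\A(C,K)\to\Ext^1_\A(C,X'))$; applying the exact functor $\Hom_\Ga(-,I)$ and Theorem~\ref{th:intro} then yields the parallel right-exact sequence
\[\oHom_\A(X',\t_C(I))\xto{i^*}\oHom_\A(K,\t_C(I))\lto\Hom_\Ga(\uHom_\A(C,X),I)\lto 0\]
with the same $i^*$. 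Finally, the Yoneda identity $\p^*(f_*[X'])=f_*(\p^*[X'])$ matches $\Phi_X\circ\partial$ with the right-hand map of the second sequence, so $\Phi_X$ is the induced isomorphism of cokernels.

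\emph{Expected difficulty.} The hard part will be the final comparison: identifying the abstract cokernel isomorphism with our specific $\Phi_X$. This rests on tracing $\psi_I$ through the naturality of the isomorphism in Theorem~\ref{th:intro} and on the functoriality of Yoneda Ext. Subsidiary care is required in descending the $\Hom$-level long exact sequences to $\oHom$, where one exploits the existence of enough injectives in $\A$.
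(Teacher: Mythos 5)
Your argument is correct, but it takes a genuinely different route from the paper's. The paper deduces Corollary~\ref{co:intro1} from Theorem~\ref{th:second-main}, whose proof stays entirely inside the functor category $(\A^\op,\Ab)$: the monomorphism is the composite of Yoneda's lemma, the identification $\Ext^1_\A(-,\t_\C(I))\cong\eff\coind_\C(I)\subseteq\coind_\C(I)$ from \eqref{eq:eta}, and the adjunction \eqref{eq:coind}; the isomorphism under the projectivity hypothesis comes from recognising $\uHom_\A(-,X)|_\C$ as the restriction of the effaceable functor $\Coker\Hom_\A(-,\pi)$ and invoking the right adjoint $\eff$ of Lemma~\ref{le:eff-adj}. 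You instead treat Theorem~\ref{th:intro} as a black box, extract the counit $\psi_I$, and argue element-wise with extensions: injectivity by splitting an extension through an injective object, surjectivity by comparing the two long exact sequences attached to $0\to K\to X'\to X\to 0$. Your approach is more elementary and makes the map completely explicit, namely $\xi\mapsto(\bar\p\mapsto\psi_I(\p^*\xi))$, at the cost of re-deriving by hand what the functor-category formalism gives for free; the paper's argument in exchange yields the sharper necessary-and-sufficient criterion of Theorem~\ref{th:second-main} and works uniformly for a set $\C$ of finitely presented objects. The two points you flag as delicate do go through: a morphism $\p\col C\to X$ with $\Ext^1_\A(\p,-)=0$ factors through \emph{every} epimorphism onto $X$ (pull back $0\to K\to X'\to X\to 0$ along $\p$ and use $\Ext^1_\A(\p,K)=0$), which justifies your identification $\uHom_\A(C,X)\cong\Ker\bigl(\Ext^1_\A(C,K)\to\Ext^1_\A(C,X')\bigr)$; and the final matching of $\Phi_X$ with the cokernel map is exactly the identity $\p^*f_*=f_*\p^*$ you cite, combined with the naturality of the isomorphism in Theorem~\ref{th:intro}.
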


A necessary and sufficient condition for \eqref{eq:proj-repres} to be
an isomorphism is given in Theorem~\ref{th:second-main}.  For an
intriguing symmetry between \eqref{eq:repres} and
\eqref{eq:proj-repres}, see Appendix~\ref{ap:ext}. Note that the
r\^{o}les of \eqref{eq:repres} and \eqref{eq:proj-repres} are quite
different. The first one provides the Auslander-Reiten translate as a
representing object, while the second seems to be more suitable for
applications. For instance, \eqref{eq:proj-repres} is used for
constructing almost split sequences. Also, \eqref{eq:proj-repres}
identifies with Serre duality for categories of quasi-coherent sheaves
over projective schemes.

There is an equivalent formulation of the isomorphism
\eqref{eq:repres} in terms of the
defect of an exact sequence \cite{Au1978}.  Given an exact
sequence \[\xi\colon 0\lto X\lto Y\lto Z\lto 0\]
in $\A$, the \emph{covariant defect} $\xi_*$ and the
\emph{contravariant defect} $\xi^*$ are defined by the exactness of
the following sequences:
\begin{gather*}
0\lto \Hom_\A(Z,-)\lto\Hom_\A(Y,-)\lto\Hom_\A(X,-)\lto \xi_*\lto 0\\
0\lto \Hom_\A(-,X)\lto\Hom_\A(-,Y)\lto\Hom_\A(-,Z)\lto \xi^*\lto 0
\end{gather*}

\begin{cor}
  For an injective $\Ga$-module $I$ there is a natural
  isomorphism \[\Hom_\Ga(\xi^*(C),I)\cong\xi_*(\t_C(I)).\]
\end{cor}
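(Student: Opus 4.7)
The plan is to reduce this corollary to Theorem~\ref{th:intro} by applying the exact functor $\Hom_\Ga(-,I)$ to an appropriate piece of the long exact sequence of $\Ext$ attached to $\xi$, and then comparing stable and unstable Hom.

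The starting observation is that the six-term exact sequence
\[0\to\Hom_\A(C,X)\to\Hom_\A(C,Y)\to\Hom_\A(C,Z)\to\Ext^1_\A(C,X)\to\Ext^1_\A(C,Y)\]
identifies $\xi^*(C)$ with $\Ker\bigl(\Ext^1_\A(C,X)\to\Ext^1_\A(C,Y)\bigr)$, since by definition $\xi^*(C)=\Coker(\Hom_\A(C,Y)\to\Hom_\A(C,Z))$. Because $I$ is injective, $\Hom_\Ga(-,I)$ is exact; applying it to the resulting left exact sequence $0\to\xi^*(C)\to\Ext^1_\A(C,X)\to\Ext^1_\A(C,Y)$ and then substituting the natural isomorphism of Theorem~\ref{th:intro} produces
\[\Hom_\Ga(\xi^*(C),I)\;\cong\;\Coker\bigl(\oHom_\A(Y,\t_C(I))\to\oHom_\A(X,\t_C(I))\bigr).\]

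What remains is to identify this cokernel with $\xi_*(\t_C(I))=\Coker\bigl(\Hom_\A(Y,\t_C(I))\to\Hom_\A(X,\t_C(I))\bigr)$, and this is the main obstacle. The quotient maps $\Hom_\A(-,\t_C(I))\twoheadrightarrow\oHom_\A(-,\t_C(I))$ form a commutative square whose kernels consist of morphisms factoring through an injective object, the latter description being available because $\A$ is Grothendieck and hence has enough injectives. Since $X\to Y$ is a monomorphism, any morphism $X\to\t_C(I)$ that factors through an injective $J$ extends, by injectivity of $J$, to a morphism $Y\to\t_C(I)$ that still factors through $J$. Hence the induced map of kernels is surjective, and the snake lemma promotes this to an isomorphism of the two cokernels. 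All identifications are manifestly functorial in $\xi$ and in $I$, so the resulting isomorphism $\Hom_\Ga(\xi^*(C),I)\cong\xi_*(\t_C(I))$ is natural.
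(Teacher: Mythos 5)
Your proof is correct and follows essentially the same route as the paper's proof of Theorem~\ref{th:defect}: identify $\xi^*(C)$ with $\Ker\Ext^1_\A(C,\p)$ for the monomorphism $\p\colon X\to Y$, dualise by the exact functor $\Hom_\Ga(-,I)$, apply the representability isomorphism of Theorem~\ref{th:intro}, and recognise the resulting cokernel of stable Hom groups as $\xi_*(\t_C(I))$. The only difference is that you spell out, via the snake lemma and the extension property of injectives along $\p$, the identification $\xi_*(\t_C(I))\cong\Coker\oHom_\A(\p,\t_C(I))$ that the paper records as ``not hard to see''.
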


In applications one often deals with an abelian category that is
$k$-linear over a commutative ring $k$. In that case the isomorphism
\eqref{eq:repres} gives for any injective $k$-module $I$ a natural
isomorphism
\begin{equation}\label{eq:k-repres}
\Hom_k(\Ext^1_\A(C,-),I)\cong\oHom_\A(-,\t(C,I))
\end{equation}
by setting $\t(C,I)=\t_C(\Hom_k(\Ga,I))$. For instance, when $\A$ is
the category of modules over a $k$-algebra $\La$ and $C$ is a finitely
presented $\La$-module, then 
\[\t(C,I)=\Hom_k(\Tr C,I).\]

A case of particular interest is given by a non-singular projective
scheme $\bbX$ of dimension $d\ge 1$ over a field $k$. For the category
$\A$ of quasi-coherent $\mathcal O_\bbX$-modules and a coherent
$\mathcal O_\bbX$-module $C$ we
have \[\tau(C,k)=\Si^{d-1}(C\otimes_{\bbX}\omega_\bbX)\]
where $\omega_\bbX$ is the dualising sheaf and $\Si^{d-1}$ denotes the
$(d-1)$st syzygy in a minimal injective resolution. In that case the
isomorphism \eqref{eq:k-repres} is a variation of Serre duality
\cite{Gr1968,Se1955}. In fact, a more familiar form of Serre duality
is given by the natural isomorphism
\[\Ext^{1}_{\bbX}(-, \Si^{d-1}(C\otimes_\bbX\omega_\bbX))\cong
\Ext^{d}_{\bbX}(-, C\otimes_\bbX\omega_\bbX)\cong
\Hom_k(\Hom_\bbX(C,-),k)\]
which identifies with \eqref{eq:proj-repres} since
$\uHom_\bbX(C,-)=\Hom_\bbX(C,-)$. This provides a precise connection
between Auslander-Reiten duality and Serre duality.

For an arbitrary Grothendieck abelian category the construction of the
\emph{Auslander-Reiten translate} $\t_C(I)$ is far from explicit; it
involves the existence of flat covers which was an open problem for
about twenty years \cite{BEE2001}.\footnote{There is an alternative
  proof of Theorem~\ref{th:intro} which obtains the Auslander-Reiten
  translate from Brown representability, using the fact that the
  homotopy category of complexes of injective objects $\bfK(\Inj\A)$
  is a compactly generated triangulated category \cite{Kr2015,St2014}.}  When
$\A$ is the category of modules over a ring, we provide an explicit
description of the Auslander-Reiten translate, making also the
connection with the dual of the transpose of Auslander and Reiten; see
Definition~\ref{de:DTr} and Theorem~\ref{th:defect1}.

This paper has three parts. First we deal with the general case of a
Grothendieck abelian category, then we consider the Auslander-Reiten
translate for module categories, and the final section is devoted to
Auslander-Reiten duality for categories of quasi-coherent sheaves.

\section{Auslander-Reiten duality for Grothendieck abelian categories}

In this section we introduce the Auslander-Reiten translate for
Grothendieck abelian categories that are locally finitely presented
(Theorem~\ref{th:main}).

Following \cite{Br1970}, a Grothendieck abelian category $\A$ is
\emph{locally finitely presented} if the finitely presented objects
generate $\A$.  Recall that an object $X$ in $\A$ is \emph{finitely
  presented} if the functor $\Hom_\A(X,-)$ preserves filtered
colimits.  We denote by $\fp\A$ the full subcategory of finitely
presented objects in $\A$.  Note that the isomorphism classes of
finitely presented objects form a set when $\A$ is locally finitely
presented.

We begin with some preparations.

\subsection*{Modules}

Let $\A$ be an additive category. We write $(\A^\op,\Ab)$ for the
category of additive functors $\A^\op\to\Ab$ where $\Ab$ denotes the
category of abelian groups. The morphisms between functors are the
natural transformations and we obtain an abelian category.  Note that
(co)kernels and (co)products are computed pointwise: for instance, a
sequence $X\to Y\to Z$ of morphisms in $(\A^\op,\Ab)$ is exact if and
only if the sequence $X(A)\to Y(A)\to Z(A)$ is exact in $\Ab$ for all
$A$ in $\A$. When $\A$ is essentially small, then the morphisms
between two functors in $(\A^\op,\Ab)$ form a set.

Now fix a set $\C$ of objects in $\A$ and view $\C$ as a full
subcategory of $\A$. We set $\Mod\C=(\C^\op,\Ab)$ and call the objects
\emph{$\C$-modules}. For example, if $\C$ consists of one object $C$,
then $\Mod\C$ is the category of modules over the endomorphism ring of
$C$.

\subsection*{Restriction and coinduction}

Let $\A$ be an additive category. For a full subcategory
$\C\subseteq\A$ there is the \emph{restriction functor}
\[(\A^\op,\Ab)\lto (\C^\op,\Ab),\qquad F\mapsto F|_\C\]
and its right adjoint, the \emph{coinduction functor}
\[\coind_\C\colon (\C^\op,\Ab)\lto (\A^\op,\Ab)\]
given by
\[\coind_\C I(X)=\Hom(\Hom_\A(-,X)|_\C,I)\qquad \text{for }I\in
(\C^\op,\Ab),\,X\in\A.\]
Thus for $F\in (\A^\op,\Ab)$ and $I\in (\C^\op,\Ab)$, there is an isomorphism
\begin{equation}\label{eq:coind}
\Hom(F|_\C,I)\cong \Hom(F,\coind_\C I).
\end{equation}

\begin{lem}\label{le:coind}
  The functor $\coind_\C$ preserves injectivity.
\end{lem}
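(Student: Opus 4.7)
The plan is to show that $\coind_\C$ is a right adjoint to an exact functor, and then invoke the standard fact that such a right adjoint preserves injective objects.

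First I would observe that the restriction functor $(-)|_\C \colon (\A^\op,\Ab)\to (\C^\op,\Ab)$ is exact. Indeed, exactness in either functor category is tested pointwise on objects, by the convention recorded just above the statement of the lemma: a sequence $F\to G\to H$ in $(\A^\op,\Ab)$ is exact iff $F(A)\to G(A)\to H(A)$ is exact in $\Ab$ for every $A$, and the same holds in $(\C^\op,\Ab)$ for every $C\in\C$. Since restriction is nothing but evaluation at the subcategory $\C$, it automatically preserves kernels and cokernels, hence is exact.

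Next I would feed this into the adjunction \eqref{eq:coind}, which states that $\coind_\C$ is right adjoint to $(-)|_\C$. Given an injective object $I\in(\C^\op,\Ab)$, I want to show $\coind_\C I$ is injective in $(\A^\op,\Ab)$. So take a monomorphism $F\hookrightarrow G$ in $(\A^\op,\Ab)$; by exactness of restriction, $F|_\C\hookrightarrow G|_\C$ is still a monomorphism. Injectivity of $I$ then gives surjectivity of $\Hom(G|_\C,I)\to\Hom(F|_\C,I)$, and the adjunction isomorphism \eqref{eq:coind} transports this to surjectivity of $\Hom(G,\coind_\C I)\to\Hom(F,\coind_\C I)$. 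This is precisely injectivity of $\coind_\C I$.

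There is essentially no obstacle here; the entire proof is the general categorical principle that a right adjoint to an exact functor preserves injectives. The only thing one has to check is exactness of restriction, which is immediate from the pointwise nature of exactness in functor categories. Since $\C$ is assumed to be a set (so that $\Mod\C$ is a well-defined category with hom-sets), no set-theoretic issues arise.
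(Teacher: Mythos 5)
Your argument is correct and is exactly the paper's proof: the restriction functor is exact (checked pointwise), and a right adjoint of an exact functor preserves injectivity, via the adjunction \eqref{eq:coind}. The paper states this in one line; you have merely unwound the standard argument.
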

\begin{proof}
The restriction functor is exact, and any right adjoint of an exact
functor preserves injectivity.
\end{proof}

\subsection*{Finitely presented functors}

Let $\A$ be an additive category. We denote by $\Fp(\A^\op,\Ab)$ the
category of finitely presented functors $F\colon\A^\op\to\Ab$. Recall
that $F$ is \emph{finitely presented} (or \emph{coherent}) if it fits
into an exact sequence
\[\Hom_\A(-,X)\lto \Hom_\A(-,Y)\lto F\lto 0.\]
Note that $\Fp(\A^\op,\Ab)$ is an abelian category when $\A$ admits
kernels. Then the assignment $X\mapsto\Hom_\A(-,X)$ identifies $\A$ with
the full subcategory of projective objects in $\Fp(\A^\op,\Ab)$ by
Yoneda's lemma.

\subsection*{Flat functors and flat covers}

Let $\C$ be an essentially small additive category. We consider the
category $(\C^\op,\Ab)$ of additive functors
$F\colon\C^\op\to\Ab$. Recall that $F$ is \emph{flat} if it can be
written as a filtered colimit of representable functors.

The following result describes the connection between locally finitely
presented categories and categories of flat functors.

\begin{thm}[{Breitsprecher \cite{Br1970}}]\label{th:flat}
  Let $\A$ be a locally finitely presented Grothendieck abelian
  category. Then the functor
  \begin{equation*}\label{eq:flat}
\A\lto ((\fp\A)^\op,\Ab),\qquad X\mapsto
  \Hom_\A(-,X)|_{\fp\A}
\end{equation*} 
identifies $\A$ with the full subcategory of flat functors
$(\fp\A)^\op\to\Ab$. Moreover, the functor admits an exact left
adjoint.\qed
\end{thm}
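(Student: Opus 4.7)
I would organize the proof around three claims: (a) the functor $i\colon X\mapsto\Hom_\A(-,X)|_{\fp\A}$ is fully faithful; (b) its essential image equals the subcategory of flat functors; (c) it admits a left adjoint, which moreover is exact.

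For (a), the key point is that $i$ preserves filtered colimits: for $C\in\fp\A$ and a filtered colimit $X=\colim X_j$ in $\A$, finite presentability of $C$ gives $i(X)(C)=\Hom_\A(C,X)=\colim\Hom_\A(C,X_j)$. Combined with local finite presentability of $\A$---which says every $X$ is the canonical filtered colimit $X=\colim_{C\to X,\;C\in\fp\A}C$ over the slice $\fp\A\da X$---a natural transformation $i(X)\to i(Y)$ amounts to a compatible family $\{C\to Y\}$ over $\fp\A\da X$, which assembles to a unique morphism $X\to Y$.

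For (b), if $X=\colim_j C_j$ is written as a filtered colimit of f.p. objects, then $i(X)=\colim_j\Hom_\A(-,C_j)|_{\fp\A}$ is a filtered colimit of representables, hence flat. Conversely, writing a flat functor as $F=\colim_j\Hom_\A(-,C_j)|_{\fp\A}$ and taking $X=\colim_j C_j$ in $\A$, the same pointwise calculation shows $i(X)\cong F$.

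For (c), every $F\in((\fp\A)^\op,\Ab)$ admits the canonical co-Yoneda presentation $F=\colim_j\Hom_\A(-,C_j)$ indexed by its category of elements; set $L(F)=\colim_j C_j$ in $\A$. The adjunction
\[\Hom_\A(L(F),Y)\cong\Hom(F,i(Y))\]
follows since both sides compute the limit of $\Hom_\A(C_j,Y)$ over the same index, and on representables one has $L(\Hom_\A(-,C))=C$. Right exactness of $L$ is automatic. The main obstacle is \emph{left} exactness of $L$. My plan is to reduce to a short exact sequence $0\to F'\to F\to F''\to 0$ of finitely presented functors (using that every functor is a filtered colimit of its f.p. subfunctors, and that filtered colimits are exact in the Grothendieck category $\A$). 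On an f.p. functor with presentation $\Hom_\A(-,A)\to\Hom_\A(-,B)\to F\to 0$, right exactness of $L$ gives $L(F)=\Coker(A\to B)$, so the remaining issue is a computation in $\A$: using the projectivity of representables in $((\fp\A)^\op,\Ab)$ to lift the sequence to a horseshoe of presentations and applying the snake lemma together with the Grothendieck axioms, one should deduce exactness of $0\to L(F')\to L(F)\to L(F'')\to 0$ in $\A$.
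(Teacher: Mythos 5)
The paper offers no proof of this statement---it is quoted from Breitsprecher \cite{Br1970} with a \qed---so your argument has to stand on its own. Parts (a) and (b) of your outline are essentially correct: full faithfulness follows from the canonical presentation $X\cong\colim_{\fp\A\da X}C$ (you should record why the comma category $\fp\A\da X$ is filtered, namely that $\fp\A$ is closed under finite direct sums and cokernels), and the identification of the essential image with the flat functors is the standard back-and-forth between filtered colimits of representables and filtered colimits of finitely presented objects of $\A$. The construction of the left adjoint $L$ as a left Kan extension and its right exactness are likewise fine.

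The gap is in your treatment of \emph{left} exactness of $L$, which is the real content of the theorem (it is in essence the Gabriel--Popescu theorem for the generating set $\fp\A$). Your reduction rests on the claim that every functor in $((\fp\A)^\op,\Ab)$ is a filtered colimit of its finitely presented \emph{subfunctors}; this is false in general: objects of a locally finitely presented category are filtered colimits of finitely presented objects, but the structure maps need not be monomorphisms, and finitely presented subobjects exhaust an object only under coherence-type hypotheses. Nor does an arbitrary short exact sequence visibly decompose as a filtered colimit of short exact sequences of finitely presented functors. Moreover, even on a finitely presented functor the two-term horseshoe-plus-snake computation you describe does not close: writing $h(A)=\Hom_\A(-,A)|_{\fp\A}$ and $F=\Coker h(\p)$ for $\p\col A\to B$ in $\fp\A$, that computation only exhibits the obstruction to exactness as the first left derived functor of $L$ evaluated at $F$, and its vanishing requires going one step further in the resolution and using that $\Ker h(\p)=\Hom_\A(-,\Ker\p)|_{\fp\A}$ is \emph{flat}, so that $L$ (already known to invert the embedding on flat functors) gives $L(\Im h(\p))\cong\Im\p\hookrightarrow B$. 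A workable reduction is: show $L$ preserves monomorphisms into projectives by writing a subfunctor $K$ of a coproduct of representables as the directed union of its finitely generated subfunctors, each of the form $\Im h(\p)$, and then apply the computation just indicated; this is where the existence of kernels in $\A$ and the exactness of its filtered colimits actually enter, and your sketch stops just short of it.
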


A morphism $\pi\colon F\to G$ in $(\C^\op,\Ab)$ is a \emph{flat
  cover} of $G$ if the following holds:
\begin{enumerate}
\item $F$ is flat and every morphism $F'\to G$ with $F'$ flat factors
  through $\pi$.
\item $\pi$ is \emph{right minimal}, that is, an endomorphism
  $\p\colon F\to F$ satisfying $\pi\p=\pi$ is invertible.
\end{enumerate}
A \emph{minimal flat presentation} of $G$ is an exact sequence
\[F_1\lto F_0\stackrel{\pi}\lto G\lto 0\] such that $F_0\to G$ and
$F_1\to\Ker\pi$ are flat covers.  A \emph{projective cover} and a
\emph{minimal projective presentation} are defined analogously,
replacing the term flat by projective.\footnote{This definition of a
  projective cover is equivalent to the usual one
  which requires the kernel to be superfluous.}

\begin{thm}[{Bican--El Bashir--Enochs \cite{BEE2001}}]
Every additive functor $\C^\op\to\Ab$ admits a flat cover.\qed
\end{thm}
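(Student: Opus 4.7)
The approach I would take follows the strategy of Bican, El Bashir, and Enochs, which reformulates the existence of flat covers as a consequence of the completeness of a certain cotorsion pair together with Enochs' theorem converting precovers into covers.

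First I would set up the relevant cotorsion pair in the Grothendieck abelian category $\mathcal{B}=(\C^\op,\Abfree)$. Let $\mathcal{F}\subseteq\mathcal{B}$ denote the class of flat functors and set $\mathcal{F}^\perp=\{G\in\mathcal{B}\mid \Ext^1_{\mathcal{B}}(F,G)=0\text{ for all }F\in\mathcal{F}\}$, the class of \emph{cotorsion} functors. Because every representable is projective (hence flat) and $\mathcal{B}$ has enough projectives in the usual weak sense, one checks that $(\mathcal{F},\mathcal{F}^\perp)$ is indeed a cotorsion pair. The goal will be to prove completeness, i.e.\ that every $G\in\mathcal{B}$ fits into a short exact sequence $0\to G'\to F\to G\to 0$ with $F\in\mathcal{F}$ and $G'\in\mathcal{F}^\perp$; such an $F\to G$ is then automatically a flat precover.

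The heart of the proof is a Kaplansky-style argument showing that the cotorsion pair is \emph{cogenerated by a set}. Concretely, I would fix a regular cardinal $\kappa$ bounded in terms of $\card\C$ and show that every flat functor $F$ admits a continuous well-ordered filtration $0=F_0\subseteq F_1\subseteq\cdots\subseteq F_\lambda=F$ whose factors $F_{\alpha+1}/F_\alpha$ are flat of cardinality at most $\kappa$. This requires the combinatorial lemma, adapted from the module case, that any subfunctor of a flat functor of cardinality $\le\kappa$ is contained in a flat subfunctor of cardinality $\le\kappa$ whose quotient is again flat. Letting $\mathcal{S}$ be a set of representatives of such small flat functors, Eklof's lemma on transfinite extensions then identifies $\mathcal{S}^\perp$ with $\mathcal{F}^\perp$, so that $(\mathcal{F},\mathcal{F}^\perp)$ is cogenerated by a set. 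Applying the Eklof--Trlifaj small-object argument to $\mathcal{S}$ produces, for each $G$, a special flat precover $0\to G'\to F\to G\to 0$.

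Finally, I would invoke Enochs' theorem: a precovering class closed under filtered colimits is already a covering class. Flat functors form exactly the closure of the representables under filtered colimits (this is the very definition of flatness), hence $\mathcal{F}$ is closed under filtered colimits. Given a flat precover $\pi\colon F\to G$, a transfinite iteration of Enochs' argument with Zorn's lemma produces a direct summand $F_0\subseteq F$ such that $\pi|_{F_0}$ is still a precover and is right minimal; right minimality uses that the inclusion $F_0\hookrightarrow F$ is essential among flat subfunctors on which $\pi$ remains surjective. The resulting morphism $F_0\to G$ is the desired flat cover.

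The main obstacle is the Kaplansky-style filtration in step two: verifying that in the potentially very general functor category $(\C^\op,\Abfree)$ one can still bound the cardinality of flat subfunctors while keeping both the subfunctor and its quotient flat. Once this combinatorial input is in place, Eklof--Trlifaj and Enochs' theorem deliver the rest essentially formally.
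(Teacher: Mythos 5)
The paper offers no proof of this statement at all: it is quoted as a black box from Bican--El Bashir--Enochs \cite{BEE2001} (hence the \qed\ directly after the statement), so there is nothing internal to compare your argument against. What you have written is a faithful outline of the standard proof in the literature, essentially Enochs' half of \cite{BEE2001}: deconstruct the class $\mathcal F$ of flat functors via a Kaplansky-type filtration into flat subquotients of bounded cardinality, conclude by Eklof's lemma that the cotorsion pair $(\mathcal F,\mathcal F^\perp)$ is cogenerated by a set, obtain special flat precovers from the Eklof--Trlifaj small-object argument (which is unproblematic here since $(\C^\op,\Ab)$ has enough projectives, namely coproducts of representables), and finally pass from precovers to covers by Enochs' theorem using closure of $\mathcal F$ under filtered colimits. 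Two remarks. First, you correctly isolate the Kaplansky filtration as the real work; note that \cite{BEE2001} disposes of the functor-category case by observing that $(\C^\op,\Ab)$ is the category of unitary modules over the ring with enough idempotents $\bigoplus_{c,c'}\Hom_\C(c,c')$, so the cardinality combinatorics reduce verbatim to the module case rather than having to be redone abstractly. Second, \cite{BEE2001} also contains an alternative first proof (Bican--El Bashir) that bypasses cotorsion pairs entirely and argues directly that a class closed under filtered colimits and admitting such a filtration is covering; either route is acceptable. Your sketch is a plan rather than a proof --- the filtration lemma and the minimality argument at the end are asserted, not verified --- but as a proof strategy it is the correct one, and for the purposes of this paper the statement is legitimately cited rather than proved.
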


The following consequence is straightforward; see
\cite[Theorem~2.2]{Kr2014}.

\begin{cor}\label{co:flatcover}
Let $\A$ be a locally finitely presented Grothendieck abelian
category. Then every functor $F\colon \A^\op\to\Ab$ that preserves
filtered colimits belongs to $\Fp(\A^\op,\Ab)$
and admits a minimal projective presentation.
\end{cor}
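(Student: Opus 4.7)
The strategy is to reduce to the functor category over $\fp\A$, where the Bican--El Bashir--Enochs theorem provides flat covers, then transport back via Breitsprecher's identification (Theorem~\ref{th:flat}). I will set $F_0 := F|_{\fp\A}$, regarded as an object of $((\fp\A)^\op,\Ab)$.

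I first build a minimal flat presentation of $F_0$. The Bican--El Bashir--Enochs theorem provides a flat cover $\pi_0 \colon P_0 \to F_0$; since any flat presentation of $F_0$ factors through $\pi_0$, the cover is in particular an epimorphism. Iterating by taking a flat cover of $\Ker \pi_0$ yields an exact sequence
\[ P_1 \lto P_0 \xto{\pi_0} F_0 \lto 0 \]
in $((\fp\A)^\op,\Ab)$ which is a minimal flat presentation in the sense of the definition above, with each $P_i$ of the form $\Hom_\A(-, Y_i)|_{\fp\A}$ for some $Y_i \in \A$ by Theorem~\ref{th:flat}.

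Next I lift this presentation to $\A^\op$. A functor $\A^\op\to\Ab$ that preserves filtered colimits is determined, via right Kan extension along $(\fp\A)^\op \hookrightarrow \A^\op$, by its restriction to $\fp\A$; this identification sends each representable $\Hom_\A(-,Y)$ to $\Hom_\A(-,Y)|_{\fp\A}$ compatibly with Breitsprecher's equivalence. Transporting the presentation along this identification yields a sequence
\[ \Hom_\A(-, Y_1) \lto \Hom_\A(-, Y_0) \xto{\pi} F \lto 0 \]
in $(\A^\op,\Ab)$ with $\pi$ extending $\pi_0$. Since representables are precisely the projectives of $\Fp(\A^\op,\Ab)$ by Yoneda's lemma, once this sequence is shown exact in $(\A^\op,\Ab)$ we conclude $F \in \Fp(\A^\op,\Ab)$; and minimality transports from the $\fp\A$-setting because the correspondence identifies endomorphism rings and hence carries right minimality of the flat covers $\pi_0$ and $P_1 \to \Ker\pi_0$ to right minimality of $\pi$ and of $\Hom_\A(-,Y_1) \to \Ker\pi$.

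The main obstacle is extending exactness from $\fp\A$ to all of $\A$: filtered limits are not exact in $\Ab$, so pointwise exactness on finitely presented objects does not formally yield pointwise exactness at arbitrary objects. The hypothesis that $F$ preserves filtered colimits is precisely what lets one overcome this, and I expect the argument to proceed by writing an arbitrary $X \in \A$ as a filtered colimit of objects $X_j \in \fp\A$ (possible since $\A$ is locally finitely presented) and combining the known exactness at the $X_j$ with the limit description of $\Hom_\A(X,Y_i)$ and of $F(X)$ coming from the filtered-colimit-preservation hypothesis.
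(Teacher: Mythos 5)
Your first half matches the paper: restrict to $\C=\fp\A$, take a minimal flat presentation $h(Y_1)\to h(Y_0)\to F|_\C\to 0$ using Bican--El Bashir--Enochs, and use Theorem~\ref{th:flat} to write the flat terms as $h(Y_i)=\Hom_\A(-,Y_i)|_\C$ for the functor $h\colon\A\to((\fp\A)^\op,\Ab)$. The gap is in the second half, which is where the content lies. You correctly observe that pointwise exactness on $\fp\A$ does not formally extend to all of $\A$, but you then only announce that the filtered-colimit hypothesis ``is precisely what lets one overcome this'' and sketch a route: write $X=\colim X_j$ with $X_j\in\fp\A$ and combine the exact sequences at the $X_j$ with the limit descriptions of $\Hom_\A(X,Y_i)$ and $F(X)$. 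That route does not work as described: what one obtains is a cofiltered limit in $\Ab$ of the exact sequences $\Hom_\A(X_j,Y_1)\to\Hom_\A(X_j,Y_0)\to F(X_j)\to 0$, and such limits are only left exact, so neither surjectivity of $\Hom_\A(X,Y_0)\to F(X)$ nor exactness in the middle follows. Relatedly, the right Kan extension you invoke is a right adjoint of restriction, hence not right exact, so it cannot be expected to carry the flat presentation to an exact sequence by abstract nonsense.

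The missing idea is to use the lifting property of flat covers against the flat object $h(X)$, rather than any exactness of limits. Writing $X=\colim_j X_j$ and using that $F$ takes filtered colimits to limits, one gets natural identifications $F(X)\cong\Hom(h(X),F|_\C)$ and $\Hom_\A(X,Y_i)\cong\Hom(h(X),h(Y_i))$, since $h(X)\cong\colim_j\Hom_\A(-,X_j)|_\C$. Because $h(X)$ is flat by Theorem~\ref{th:flat}, every morphism $h(X)\to F|_\C$ factors through the flat cover $h(Y_0)\to F|_\C$ (clause (1) of the definition), which gives surjectivity of $\Hom_\A(X,Y_0)\to F(X)$; and every morphism $h(X)\to\Ker\bigl(h(Y_0)\to F|_\C\bigr)$ factors through the flat cover $h(Y_1)\to\Ker\bigl(h(Y_0)\to F|_\C\bigr)$, which gives exactness in the middle. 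This is what ``apply Theorem~\ref{th:flat}'' accomplishes in the paper's one-line proof; minimality then transports exactly as you say, via the identification of endomorphism rings. As written, your argument establishes the minimal flat presentation downstairs but does not show that the lifted sequence is a presentation of $F$ in $(\A^\op,\Ab)$.
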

\begin{proof}
Choose a minimal flat presentation of $F|_{\fp\A}$ and apply Theorem~\ref{th:flat}.
\end{proof}

The next lemma will be needed to identify injective objects in
a locally finitely presented Grothendieck abelian category.

\begin{lem}\label{le:domdim}
Let $\C$ be an essentially small additive category and consider the
following conditions in  $(\C^\op,\Ab)$.
\begin{enumerate}
\item Given a minimal injective copresentation $0\to G\to I^0\to I^1$
  such that
$G$ is flat, then $I^0$ and $I^1$ are flat.
\item Given a minimal flat presentation $F_1\to F_0\to G\to 0$ such
  that $G$ is injective, then $F_0$ and $F_1$ are injective.
\end{enumerate}
Then \emph{(1)} implies \emph{(2)}.
\end{lem}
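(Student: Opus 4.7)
The strategy is to mimic for each of $F_0$ and $F_1$ a single argument: show that the minimal injective envelope $F_i \hookrightarrow$ (injective) is flat by hypothesis (1), build a retraction using the flat-cover property on the other side, and conclude via right minimality of the flat cover that $F_i$ is already its own injective envelope.

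For $F_0$: let $0 \to F_0 \to I^0 \to I^1$ be a minimal injective copresentation; by (1), $I^0$ is flat. Since $G$ is injective, the flat cover $\pi\colon F_0 \to G$ extends along the essential mono $F_0 \hookrightarrow I^0$ to some $\tilde\pi\colon I^0 \to G$. Now $I^0$ is flat, so $\tilde\pi$ factors as $\tilde\pi = \pi\sigma$ for some $\sigma\colon I^0 \to F_0$; restricting to $F_0$ gives $\pi(\sigma|_{F_0}) = \pi$, so right minimality of the flat cover makes $\sigma|_{F_0}$ invertible. Hence $F_0 \hookrightarrow I^0$ is a split essential mono, and therefore an isomorphism, so $F_0$ is injective.

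For $F_1$: set $K = \Ker\pi$, so that $F_1 \to K$ is a flat cover. Take a minimal injective copresentation $0 \to F_1 \to J^0 \to J^1$; by (1) both $J^0$ and $J^1$ are flat. The obstacle here is that $K$ is not known to be injective, so one cannot directly extend $F_1 \to K$ along $F_1 \hookrightarrow J^0$. Instead, I extend the composite $F_1 \to K \hookrightarrow F_0$ using $F_0$ injective to obtain $\alpha\colon J^0 \to F_0$; then $\pi\alpha$ vanishes on $F_1$ and so factors through the essential extension $J^0/F_1 \hookrightarrow J^1$. Using that $G$ is injective I extend this factorization to $\beta\colon J^1 \to G$, and using that $J^1$ is flat I lift $\beta$ along $\pi$ to $\gamma\colon J^1 \to F_0$. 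The corrected map $\mu = \alpha - \gamma\circ(J^0\to J^1)\colon J^0 \to F_0$ then satisfies $\pi\mu = 0$, so $\mu$ lands in $K$, and $\mu|_{F_1}$ agrees with the flat cover $F_1 \to K$. Flatness of $J^0$ together with the flat-cover property produce $\nu\colon J^0 \to F_1$ lifting $\mu$, and right minimality forces $\nu|_{F_1}$ to be an automorphism of $F_1$. Hence $F_1 \hookrightarrow J^0$ splits, and being essential it is an isomorphism, so $F_1$ is injective.

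The main difficulty is concentrated in the second step: the workaround for the possibly non-injective kernel $K$ requires two uses of (1), namely flatness of both $J^0$ and $J^1$, together with an extension through $F_0$ rather than directly through $K$, and a correction by a map factoring through $J^1$. Everything else — the use of essentialness of injective envelopes, extension along injectives, lifting through flat covers, and the final split-plus-essential-implies-iso step — is formally identical to the argument for $F_0$.
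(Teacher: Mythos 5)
Your proof is correct. The argument for $F_0$ coincides with the paper's: extend the flat cover $\pi$ over the injective envelope of $F_0$ (which is flat by hypothesis (1)), factor the extension back through $\pi$ using flatness, and invoke right minimality to split the envelope. For $F_1$ you take a genuinely different, more hands-on route. The paper extends $F_1\to F_0$ to a map $I^0\to F_0$ (using that $F_0$ is now known to be injective), forms the cokernel $H$ of the resulting map $I^0\to F_0\oplus I^1$, checks via a small diagram chase that the induced map $G\to H$ is a monomorphism and hence splits because $G$ is injective, and then uses flatness of $F_0\oplus I^1$ and of $I^0$ to map this second presentation back onto the minimal flat presentation; right minimality then makes the composite $F_1\to I^0\to F_1$ invertible. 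You avoid the auxiliary object $H$ entirely: you extend $F_1\to\Ker\pi\hookrightarrow F_0$ to $\alpha\colon J^0\to F_0$, kill the composite $\pi\alpha$ by subtracting $\gamma d$ (with $\gamma$ obtained by lifting through the flat cover $\pi$ from the flat object $J^1$), and then lift the corrected map, which lands in $\Ker\pi$, through the flat cover $F_1\to\Ker\pi$. The ingredients are identical in both versions --- injectivity of $F_0$ established first, flatness of \emph{both} terms of the minimal injective copresentation of $F_1$, the lifting property of flat covers, and right minimality --- but your bookkeeping makes explicit exactly where each flatness hypothesis enters and sidesteps the monomorphism check for $G\to H$. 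Both arguments are equally valid; yours is arguably the more elementary of the two.
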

\begin{proof}
  Fix a minimal flat presentation $F_1\to F_0\xto{\pi} G\to 0$ such
  that $G$ is injective.  Let $F_0\to E(F_0)$ be an injective
  envelope. Then $\pi$ factors through this since $G$ is injective. On
  the other hand, the morphism $E(F_0)\to G$ factors through $\pi$
  since $E(F_0)$ is flat. The minimality of $\pi$ implies that $F_0$
  is a direct summand of $E(F_0)$, and therefore $F_0$ is
  injective. Now choose a minimal injective copresentation
  $0\to F_1\to I^0\to I^1$. This gives rise to the following
  commutative diagram with exact rows.
\[\begin{tikzcd}
F_1\arrow{r}\arrow{d}& F_0\arrow{r}\arrow{d}&G\arrow{r}\arrow{d}&0\\
I^0\arrow{r}& F_0\oplus I^1\arrow{r}&H\arrow{r}&0
\end{tikzcd}\]
The vertical morphisms are monomorphism, and therefore $G\to H$
splits. The inverse $H\to G$ induces the following 
commutative diagram with exact rows since $I^0$ and $I^1$ are flat.
\[\begin{tikzcd}
I^0\arrow{r}\arrow{d}&  F_0\oplus I^1\arrow{r}\arrow{d}&H\arrow{r}\arrow{d}&0\\
F_1\arrow{r}& F_0\arrow{r}&G\arrow{r}&0
\end{tikzcd}\]
Now the minimality of the flat presentation implies that the
composition $F_1\to I^0\to F_1$ is invertible. Thus $F_1$ is injective.
\end{proof}

Let $\A$ be a locally finitely presented Grothendieck abelian
category. Set $\C=\fp\A$ and consider the functor
$h\colon\A\to (\C^\op,\Ab)$ from Theorem~\ref{th:flat}.  Then $X\in\A$
is injective if and only if $h(X)$ is injective, since $h$ has an
exact left adjoint.  In fact, $h$ takes an injective copresentation
in $\A$ to one in $(\C^\op,\Ab)$. Thus $\C$ satisfies condition (1) in
Lemma~\ref{le:domdim}.

\begin{cor}\label{co:domdim}
  Let $\A$ be a locally finitely presented Grothendieck abelian
  category and consider a minimal projective presentation
\[\Hom_\A(-,X_1)\lto \Hom_\A(-,X_0)\lto F\lto 0\] of a functor $F$ in
$\Fp(\A^\op,\Ab)$. If $F|_{\fp\A}$ is an injective object in
$((\fp\A)^\op,\Ab)$, then $X_0$ and $X_1$ are injective objects in $\A$.
\end{cor}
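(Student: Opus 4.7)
The plan is to apply Lemma~\ref{le:domdim} in the functor category $(\C^\op,\Ab)$ with $\C=\fp\A$, and then transfer the conclusion back to $\A$ via the fully faithful functor $h\colon\A\to(\C^\op,\Ab)$ of Theorem~\ref{th:flat}. The discussion preceding the corollary already verifies that $\C$ satisfies condition~(1) of Lemma~\ref{le:domdim}, so the lemma yields condition~(2) for $\C$ at once.

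First I would restrict the given minimal projective presentation to $\C$, obtaining a sequence
\[h(X_1)\lto h(X_0)\lto F|_\C\lto 0\]
in $(\C^\op,\Ab)$. This sequence is exact because kernels and cokernels in functor categories are computed pointwise, and each $h(X_i)$ is flat by Theorem~\ref{th:flat}. Next I would verify that this is a \emph{minimal flat} presentation of $F|_\C$: since $h$ identifies $\A$ with the full subcategory of flat functors, endomorphisms of $h(X_i)$ correspond bijectively to endomorphisms of $X_i$ in $\A$, so the right-minimality of the projective cover $\Hom_\A(-,X_0)\to F$, and likewise of $\Hom_\A(-,X_1)\to\Ker$, transfers to right-minimality of the restricted maps viewed as flat covers. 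With $F|_\C$ assumed injective, Lemma~\ref{le:domdim}(2) then shows that $h(X_0)$ and $h(X_1)$ are injective in $(\C^\op,\Ab)$. Finally, since $h$ has an exact left adjoint, $X\in\A$ is injective if and only if $h(X)$ is injective, as noted before the corollary; hence $X_0$ and $X_1$ are injective in $\A$.

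The delicate point is the second step, and in particular the factoring property required for $h(X_0)\to F|_\C$ to qualify as a flat cover: given a flat functor $h(X')\to F|_\C$ with $X'\in\A$, one must produce a lift through $h(X_0)$. This amounts to identifying morphisms $h(X')\to F|_\C$ in $(\C^\op,\Ab)$ with data that can be lifted against the surjection $\Hom_\A(-,X_0)\to F$ by Yoneda, essentially reversing the correspondence invoked in the proof of Corollary~\ref{co:flatcover}, which extracts a minimal projective presentation on $\A$ from a minimal flat presentation on $\C$. Once this correspondence is seen to be an equivalence (so that restriction indeed carries minimal projective presentations to minimal flat presentations), the argument outlined above goes through without further difficulty.
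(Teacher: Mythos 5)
Your argument is correct and follows exactly the paper's proof: restrict the minimal projective presentation to $\fp\A$, recognise the result as a minimal flat presentation of $F|_{\fp\A}$, apply Lemma~\ref{le:domdim} (whose hypothesis (1) is verified in the discussion preceding the corollary), and transfer injectivity back to $\A$ via the exact left adjoint of the functor from Theorem~\ref{th:flat}. The paper simply asserts that the restricted sequence is a minimal flat presentation; your elaboration of that step (including the factoring property for the flat cover) supplies detail the paper omits, so there is nothing further to add.
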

\begin{proof}
The sequence
\[\Hom_\A(-,X_1)|_{\fp\A}\lto \Hom_\A(-,X_0)|_{\fp\A}\lto F|_{\fp\A}\lto 0\]
is a minimal flat presentation of $F|_{\fp\A}$ in
$((\fp\A)^\op,\Ab)$. Thus the assertion follows from
Lemma~\ref{le:domdim}.
\end{proof}

\subsection*{The defect of an exact sequence}

We recall the following notion from \cite[II.3]{Au1978}.  Let $\A$ be
an abelian category.  For an exact sequence
\[\xi\colon 0\lto X\lto Y\lto Z\lto 0\]
in $\A$ the \emph{covariant defect} $\xi_*$ and the
\emph{contravariant defect} $\xi^*$ are defined by the exactness of
the following sequences:
\begin{gather*}
0\lto \Hom_\A(Z,-)\lto\Hom_\A(Y,-)\lto\Hom_\A(X,-)\lto \xi_*\lto 0\\
0\lto \Hom_\A(-,X)\lto\Hom_\A(-,Y)\lto\Hom_\A(-,Z)\lto \xi^*\lto 0
\end{gather*}

The functors $\xi_*\colon\A\to\Ab$ given by the exact sequences $\xi$
in $\A$ form an abelian category, with morphisms the natural
transformations. We denote this category by $\Eff(\A,\Ab)$, because
the objects are precisely the finitely presented functors
$F\colon\A\to\Ab$ that are \emph{locally effaceable} \cite{Gr1957},
that is, for each $x\in F(X)$ there exists a monomorphism
$\p\colon X\to Y$ such that $F(\p)(x)=0$. The assignment
$F\mapsto D_\A(F)$ given by
\[D_\A(F)(X)=\Ext^2(F,\Hom_\A(X,-))\] yields an equivalence
\begin{equation}\label{eq:eff}
D_\A\colon \Eff(\A,\Ab)^\op\xto{\ \sim\ }\Eff(\A^\op,\Ab),
\end{equation}
where $\Ext^2(-,-)$ is computed in the abelian category $\Fp(\A,\Ab)$
and the inverse is given by $D_{\A^\op}$. Note that
$D_\A(\xi_*)=\xi^*$ and $D_{\A^\op}(\xi^*)=\xi_*$. When the context is
clear we write $D$ instead of $D_\A$.

We continue with some further properties of locally effaceable
functors. For a discussion of the following result, see also \cite{Oo1963}.

\begin{lem}\label{le:hom-ext}
  Suppose that every object $X\in\A$ admits a monomorphism
  $\iota \colon X\to Y$ such that $\Ext^1_\A(-,\iota)=0$. Then the
  assignment $\p\mapsto\Ext^1_\A(-,\p)$ induces for all objects $X,X'\in\A$ an isomorphism
\[\oHom_\A(X,X')\xto{\ \sim\ }\Hom(\Ext^1_\A(-,X),\Ext^1_\A(-,X')).\]
\end{lem}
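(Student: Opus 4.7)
The map $\varphi\mapsto\Ext^1_\A(-,\varphi)$ is well-defined and injective by construction of $\oHom_\A$, so the only content is surjectivity. My plan is to exploit the hypothesis on $\iota$ to realise $\Ext^1_\A(-,X)$ as a quotient of a representable functor, and then to argue by a Yoneda-type reduction.

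Fix a monomorphism $\iota\colon X\to Y$ with $\Ext^1_\A(-,\iota)=0$ and set $Z=\Coker\iota$, with associated exact sequence $\xi\colon 0\to X\to Y\to Z\to 0$ and class $[\xi]\in\Ext^1_\A(Z,X)$. The vanishing hypothesis combined with the long exact sequence makes the connecting map $\Hom_\A(-,Z)\to\Ext^1_\A(-,X)$ pointwise surjective, so every element of $\Ext^1_\A(W,X)$ has the form $f^*[\xi]$ for some $f\colon W\to Z$. Given a natural transformation $\psi\colon\Ext^1_\A(-,X)\to\Ext^1_\A(-,X')$, set $\alpha=\psi_Z[\xi]\in\Ext^1_\A(Z,X')$. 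Naturality of $\psi$ applied to the projection $Y\to Z$ shows that $\alpha$ restricts to zero in $\Ext^1_\A(Y,X')$, because $[\xi]$ itself restricts to zero there (the sequence $\xi$ splits when pulled back along $Y\to Z$). Applying the long exact sequence for $\xi$ with coefficients in $X'$, this forces $\alpha$ to lie in the image of the connecting map $\Hom_\A(X,X')\to\Ext^1_\A(Z,X')$, so there exists $\varphi\colon X\to X'$ with $\varphi_*[\xi]=\alpha$.

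It remains to verify $\Ext^1_\A(-,\varphi)=\psi$. For any $W$ and any $\eta\in\Ext^1_\A(W,X)$, write $\eta=f^*[\xi]$; naturality of $\psi$ gives $\psi_W(\eta)=f^*\alpha$, while functoriality of push-forward gives $\varphi_*\eta=f^*(\varphi_*[\xi])=f^*\alpha$, so the two agree, and naturality in $W$ is immediate. The main obstacle is the construction of $\varphi$ from $\alpha$; this succeeds precisely because the hypothesis on $\iota$ yields both the surjectivity of $\Hom_\A(-,Z)\to\Ext^1_\A(-,X)$ and, via naturality of $\psi$, the vanishing needed to lift $\alpha$ through the connecting map. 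The remaining verification is then a direct Yoneda-style computation.
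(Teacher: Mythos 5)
Your proof is correct, but it takes a genuinely different route from the paper. The paper deduces the lemma in three lines from the duality \eqref{eq:eff} between effaceable functors: choosing $\xi\colon 0\to X\xto{\iota}Y\to Z\to 0$ with $\Ext^1_\A(-,\iota)=0$, one has $\xi^*\cong\Ext^1_\A(-,X)$ and $\xi_*\cong\oHom_\A(X,-)$, and the isomorphism then falls out of the full faithfulness of $D_\A\colon\Eff(\A,\Ab)^\op\to\Eff(\A^\op,\Ab)$ together with a Yoneda argument for the functors $\oHom_\A(X,-)$. You instead give the classical hands-on argument (essentially that of Hilton--Rees and Oort, whom the paper cites for context): the hypothesis makes the connecting map $\Hom_\A(-,Z)\to\Ext^1_\A(-,X)$ pointwise surjective, so $\Ext^1_\A(-,X)$ is ``generated'' by the universal class $[\xi]$; naturality of $\psi$ forces $\psi_Z[\xi]$ to die in $\Ext^1_\A(Y,X')$, whence it lifts through the connecting map to a morphism $\varphi\colon X\to X'$, and the Yoneda-style computation $\psi_W(f^*[\xi])=f^*\varphi_*[\xi]=\varphi_*(f^*[\xi])$ closes the argument. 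All the individual steps check out: the pullback of $\xi$ along $Y\to Z$ does split, the six-term Yoneda Ext sequences you invoke are valid in any abelian category, and injectivity is indeed built into the definition of $\overline\A$ used in this paper. Your version is more elementary and self-contained; the paper's version is shorter because the duality machinery \eqref{eq:eff} is set up anyway and is reused for the defect formulas later on.
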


Clearly, the assumption on $\A$ is satisfied when $\A$ has enough
injective objects.
\begin{proof}
Let $\xi\colon 0\to X\xto{\iota} Y\to Z\to 0$ be an exact sequence
such that $\Ext^1_\A(-,\iota)=0$. Then we have
\[\xi^*\cong\Ext^1_\A(-,X)\qquad\text{and}\qquad\xi_*\cong\oHom_\A(X,-).\]
Now apply the duality \eqref{eq:eff}.
\end{proof}

\begin{lem}\label{le:eff-adj}
The inclusion $\Eff(\A^\op,\Ab)\to \Fp(\A^\op,\Ab)$ admits a right adjoint
\begin{equation*}
\eff\colon \Fp(\A^\op,\Ab)\lto \Eff(\A^\op,\Ab).
\end{equation*}
\end{lem}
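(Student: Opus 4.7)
My plan is to construct $\eff(F)$ explicitly as the largest effaceable subfunctor of $F$, using two key facts: a concrete characterization of $\Eff(\A^\op,\Ab)$ and the observation that representable functors contain no nonzero effaceable subfunctor. For the characterization, I would show that $G \in \Fp(\A^\op,\Ab)$ lies in $\Eff(\A^\op,\Ab)$ if and only if it admits a projective presentation $\Hom_\A(-,P) \xto{p_*} \Hom_\A(-,Q) \to G \to 0$ in which $p\colon P \to Q$ is an epimorphism of $\A$: one direction realizes $G$ as $\xi^*$ for $\xi\colon 0 \to \ker p \to P \xto{p} Q \to 0$, and the other effaces the tautological class $[1_Q] \in G(Q)$ by a monomorphism and observes that this forces $p$ to be epi.

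The principal step is then to prove that any monomorphism $G \hookrightarrow \Hom_\A(-,Z)$ with $G \in \Eff$ has $G = 0$. Given an epi-presentation $p\colon P \twoheadrightarrow Q$ of $G$, the generator $[1_Q] \in G(Q) \subseteq \Hom_\A(Q,Z)$ is some morphism $\psi\colon Q \to Z$; effacing $[1_Q]$ by an epimorphism $B \twoheadrightarrow Q$ in $\A$ sends $\psi$ to $0$ in $G(B) \subseteq \Hom_\A(B,Z)$, forcing $\psi = 0$ and hence $G = 0$.

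Given $F \in \Fp(\A^\op,\Ab)$ with presentation $\Hom_\A(-,X) \xto{\alpha_*} \Hom_\A(-,Y) \to F \to 0$, set $Z = \Coker\alpha$ and $q\colon Y \to Z$; since $q\alpha = 0$ the morphism $q_*$ descends to $\eta_F\colon F \to \Hom_\A(-,Z)$, and I would define $\eff(F) := \ker\eta_F$. Writing $\alpha = ij$ with $j\colon X \twoheadrightarrow \Im\alpha$ and $i\colon \Im\alpha \hookrightarrow Y$, a diagram chase identifies $\eff(F)$ with the defect $\xi^*$ of $\xi\colon 0 \to \ker\alpha \to X \xto{j} \Im\alpha \to 0$, which lies in $\Eff$ by the first step. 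For the adjunction, any morphism $g\colon G \to F$ with $G \in \Eff$ has $\eta_F \circ g$ landing in $\Hom_\A(-,Z)$ with effaceable image (quotients of effaceables remain effaceable), hence image zero by the principal step; thus $g$ factors uniquely through $\eff(F) \hookrightarrow F$, giving the natural bijection $\Hom(G,F) \cong \Hom(G,\eff(F))$ and the independence of $\eff(F)$ from the chosen presentation. The main obstacle is the principal step; the rest reduces to diagram chasing inside the abelian category $\Fp(\A^\op,\Ab)$.
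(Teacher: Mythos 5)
Your construction agrees with the paper's: the kernel of $\eta_F\colon F\to\Hom_\A(-,Z)$ is precisely $\Coker\Hom_\A(-,\alpha')$ for $\alpha'\colon X\to\Im\alpha$, which is the formula the paper gives for $\eff(F)$ (stating it without verifying the universal property, which your principal step --- a representable functor has no nonzero effaceable subfunctor --- correctly supplies), so this is essentially the same proof with the details filled in. One small caution: for functors on $\A^\op$ the effacing morphisms are epimorphisms of $\A$ (i.e.\ monomorphisms of $\A^\op$), as you correctly use in the principal step, so the word ``monomorphism'' in your characterization of $\Eff(\A^\op,\Ab)$ should be read in $\A^\op$.
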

\begin{proof}
The right adjoint  takes a functor $F=\Coker \Hom_\A(-,\p)$ given by a morphism
$\p\colon X\to Y$ in $\A$ to $\Coker \Hom_\A(-,\p')$ where $\p'\colon
X\to\Im\p$ is the morphism induced by $\p$.
\end{proof}

\subsection*{Auslander-Reiten duality}

Let $\A$ be an abelian category and $\C$ a set of objects in
$\A$. Then every object $X$ in $\A$ gives rise to a $\C$-module
\[\Ext^1_\A(\C, X) := \Ext^1_\A(-, X)|_\C.\]

\begin{thm}\label{th:main}
  Let $\A$ be a locally finitely presented Grothendieck abelian
  category and $\C$ a set of finitely presented objects in $\A$.  Then
  for every injective $\C$-module $I$ there exists an object
  $\t_\C(I)$ in $\overline\A$ and a natural isomorphism
\begin{equation*}\label{eq:tau}
  \Hom(\Ext^1_\A(\C,-),I)\cong\oHom_{\A}(-,\tau_\C(I)).
\end{equation*}
\end{thm}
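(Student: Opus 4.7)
The plan is to construct $\tau_\C(I)$ from a minimal flat presentation of the coinduced module $J := \coind_\C I$, and then to verify the natural isomorphism by combining the coinduction adjunction with the defect duality \eqref{eq:eff}. First I would form $J \in ((\fp\A)^{\op},\Ab)$ along the inclusion $\C \subseteq \fp\A$; by Lemma \ref{le:coind}, $J$ is injective. The Bican--El Bashir--Enochs theorem supplies a minimal flat presentation $F_1 \to F_0 \to J \to 0$. Since $\fp\A$ satisfies condition~(1) of Lemma \ref{le:domdim} (verified in the paragraph following that lemma), condition~(2) forces $F_0$ and $F_1$ to be injective. By Theorem \ref{th:flat}, there exist injective $Y_0, Y_1 \in \A$ with $F_i \cong \Hom_\A(-, Y_i)|_{\fp\A}$, and by full faithfulness of the Breitsprecher embedding $h$, the morphism $F_1 \to F_0$ comes from a morphism $\psi \colon Y_1 \to Y_0$ in $\A$. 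Define $\tau_\C(I) := \ker\psi$; since $h$ is a right adjoint it preserves kernels, so one obtains an exact sequence $0 \to h(\tau_\C(I)) \to h(Y_1) \to h(Y_0) \to J \to 0$ in $((\fp\A)^{\op},\Ab)$.

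For the isomorphism, set $M := \Ext^1_\A(-, X)|_{\fp\A}$. The coinduction adjunction gives $\Hom_\C(\Ext^1_\A(\C, X), I) \cong \Hom_{\fp\A}(M, J)$. From an embedding $X \hookrightarrow E$ with $E$ injective, the long exact sequence yields $\Hom_\A(-, E) \to \Hom_\A(-, E/X) \to \Ext^1_\A(-, X) \to 0$, whose restriction to $\fp\A$ is a presentation $h(E) \to h(E/X) \to M \to 0$. Applying the exact left adjoint $L$ of $h$ then yields $L(M) = \coker(E \to E/X) = 0$; hence $\Hom_{\fp\A}(M, h(Y)) = \Hom_\A(L(M), Y) = 0$ for every $Y \in \A$. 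In particular this vanishes for $Y = Y_0, Y_1$, which are themselves injective, so $\Ext^j(M, h(Y_i)) = 0$ for all $j \geq 0$. Splitting the four-term sequence above into two short exact sequences and chasing the long exact sequences of $\Hom(M, -)$ then yields
\[
\Hom_{\fp\A}(M, J) \cong \Ext^2_{((\fp\A)^{\op},\Ab)}(M, h(\tau_\C(I))).
\]

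It remains to identify this $\Ext^2$ with $\oHom_\A(X, \tau_\C(I))$. The defect duality \eqref{eq:eff} applied to $\xi \colon 0 \to X \to E \to E/X \to 0$ gives $\oHom_\A(X, \tau_\C(I)) = \xi_*(\tau_\C(I)) \cong \Ext^2_{\Fp(\A^{\op},\Ab)}(\Ext^1_\A(-, X), \Hom_\A(-, \tau_\C(I)))$, using $\xi^* = \Ext^1_\A(-, X)$ and $\xi_* = \oHom_\A(X, -)$. The hard part will be matching these two $\Ext^2$ groups, which a priori live in distinct ambient abelian categories. This should follow by transferring the relevant presentations and resolutions via the full faithfulness of $h$ and the exactness of $L$, reducing both $\Ext^2$ computations to a common expression in terms of the underlying morphisms $\psi$ and $X \hookrightarrow E$. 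Naturality in $X$ then follows from the naturality of each construction involved.
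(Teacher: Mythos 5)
Your construction of $\tau_\C(I)$ coincides with the paper's: take a minimal flat presentation of the injective object $\coind_\C I$, identify its flat terms via Theorem~\ref{th:flat} with objects $Y_1,Y_0$ of $\A$ that are injective by Lemma~\ref{le:domdim} (the paper packages this as Corollary~\ref{co:domdim}), and set $\tau_\C(I)=\Ker\psi$. Your verification, however, departs from the paper's. The paper never leaves $(\A^\op,\Ab)$: it notes that $\eff\coind_\C(I)\cong\Ext^1_\A(-,\tau_\C(I))$, chains the coinduction adjunction \eqref{eq:coind} with the adjunction for the coreflection $\eff$ of Lemma~\ref{le:eff-adj}, and concludes with Lemma~\ref{le:hom-ext}. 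Your intermediate steps are sound as far as they go: the adjunction $\Hom(\Ext^1_\A(\C,X),I)\cong\Hom(M,J)$, the observation that $\Hom(M,h(Y))\cong\Hom_\A(L(M),Y)=0$ because the exact left adjoint $L$ kills $M$, and the dimension shift giving $\Hom(M,J)\cong\Ext^2_{((\fp\A)^\op,\Ab)}(M,h(\tau_\C(I)))$ are all correct.

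The genuine gap is exactly the step you flag and postpone: identifying $\Ext^2_{((\fp\A)^\op,\Ab)}(M,h(\tau_\C(I)))$ with $\Ext^2_{\Fp(\A^\op,\Ab)}(\Ext^1_\A(-,X),\Hom_\A(-,\tau_\C(I)))$, the group that the duality \eqref{eq:eff} converts into $\oHom_\A(X,\tau_\C(I))$. Your proposed repair, ``transferring the resolutions via $h$ and $L$,'' does not go through as stated, because restriction to $\fp\A$ destroys projectivity: the canonical projective resolution $0\to\Hom_\A(-,X)\to\Hom_\A(-,E)\to\Hom_\A(-,E/X)\to\Ext^1_\A(-,X)\to 0$ in $\Fp(\A^\op,\Ab)$ restricts to a resolution of $M$ by flat but in general non-projective objects of $((\fp\A)^\op,\Ab)$, so it does not compute your $\Ext^2$, and there is no evident comparison map between the two groups. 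The clean fix is to run your dimension-shifting argument one level up, in $\Fp(\A^\op,\Ab)$: write $\Hom(\Ext^1_\A(\C,X),I)\cong\Hom(\Ext^1_\A(-,X),\coind_\C(I))$ and shift along $0\to\Hom_\A(-,T_2)\to\Hom_\A(-,T_1)\to\Hom_\A(-,T_0)\to\coind_\C(I)\to 0$, where the middle terms \emph{are} projective; the required vanishing of $\Ext^j(\Ext^1_\A(-,X),\Hom_\A(-,T_i))$ for $j=0,1,2$ follows from the injectivity of $T_0,T_1$ by a direct computation with the resolution coming from $X\hookrightarrow E$. This lands directly in $D_{\A^\op}(\Ext^1_\A(-,X))(\tau_\C(I))\cong\oHom_\A(X,\tau_\C(I))$ with no change of ambient category. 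Alternatively, drop the $\Ext^2$ detour altogether and argue as the paper does via $\eff$ and Lemma~\ref{le:hom-ext}.
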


The special case that $\C$ consists of a single object is precisely
Theorem~\ref{th:intro} from the introduction.  The theorem suggests
the following definition.

\begin{defn}\label{de:tau}
  The object $\tau_\C(I)$ is called the \emph{Auslander-Reiten
    translate} with respect to $\C\subseteq\fp\A$ and $I\in\Mod\C$.
  The assignment $I\mapsto\t_\C(I)$ yields a functor
\[\t_\C\colon\Inj\Mod\C\lto\overline\A.\]
\end{defn}

\begin{proof}[Proof of Theorem~\ref{th:main}]
  We fix $\C\subseteq\fp\A$ and $I\in\Mod\C$. The functor
  $\coind_\C(I)$ in $(\A^\op,\Ab)$ preserves filtered colimits and
  admits therefore a minimal presentation
  \[0\to\Hom_\A(-,T_2)\to\Hom_\A(-,T_1)\to\Hom_\A(-,T_0)\to
  \coind_\C(I)\to 0\]
  by Corollary~\ref{co:flatcover}.  In $\A$ this gives rise to an exact
  sequence
\[\eta\colon 0\lto T_2 \lto T_1\lto \bar T_0\lto 0.\] 
We set
\[\t_\C(I):=T_2.\]
Note that $T_1$ is an injective object when $I$ is injective. This
follows from Corollary~\ref{co:domdim}, because $\coind_\C(I)$ is an
injective object in $((\fp\A)^\op,\Ab)$ by Lemma~\ref{le:coind}. Thus
\begin{equation}\label{eq:eta}
\eff\coind_\C(I)\cong \eta^*\cong\Ext^1_\A(-,\t_\C(I)).
\end{equation}

Now  fix an object $X\in\A$. Then we have
\begin{align*} 
\Hom(\Ext^1_\A(-,X)|_\C,I)
&\cong\Hom(\Ext^1_\A(-,X),\coind_\C(I))\\
&\cong\Hom(\Ext^1_\A(-,X),\eff\coind_\C(I))\\
&\cong\Hom(\Ext^1_\A(-,X),\Ext^1_\A(-,\t_\C(I)))\\
&\cong\oHom_{\A}(X,\tau_\C(I)).
\end{align*}
Let us label the $n$th isomorphism by ($n$).  Then (1) and (2) are
adjunctions, given by \eqref{eq:coind} and Lemma~\ref{le:eff-adj}, (3)
uses \eqref{eq:eta}, and (4) follows from Lemma~\ref{le:hom-ext}.  This
completes the proof.
\end{proof}

One may wonder whether the functor
$\Ext_\A^1(\C,-)\colon\overline\A\to\Mod\C$ admits a right adjoint. In fact,
the proof of Theorem~\ref{th:main} provides for an arbitrary
$\C$-module $I$ an object $\t_\C(I)$ and a natural monomorphism
 \[\Hom(\Ext^1_\A(\C,-),I)\lto\oHom_{\A}(-,\tau_\C(I)).\] 
It is easily seen that this is not invertible in general.

\begin{rem}
  Let $\C\subseteq\D\subseteq\fp\A$ and denote by $i\colon\C\to\D$ the
  inclusion. Then we have $\tau_\C=\tau_\D\comp i_*$ where $i_*$ is
  the right adjoint of restriction $\Mod\D\to\Mod\C$.
\end{rem}

\begin{rem}\label{re:stable}
  Consider the stable category modulo projectives $\underline\A$. For
  a set of objects $\C\subseteq\A$ let $\underline\C$ denote the
  corresponding subcategory of $\underline\A$. Then we have
\[\Ext^1_\A(\C,X)\in\Mod\underline\C\subseteq\Mod\C\]
for all $X\in\A$ and one can replace $\Mod\C$ by $\Mod\underline\C$ in
Theorem~\ref{th:main}.
\end{rem}

\subsection*{Auslander-Reiten duality relative to a base}

Let $k$ be a commutative ring. Suppose that $\A$ is a $k$-linear
and locally finitely presented Grothendieck abelian category.

\begin{cor}\label{co:base}
There is a functor
\[\fp\A\times\Inj\Mod k\lto\overline\A,\qquad (C,I)\mapsto\tau(C,I)\] and
a natural isomorphism
\[
 \Hom_{k}(\Ext^1_\A(C,-),I)\cong\oHom_{\A}(-,\tau(C,I)).
\]
\end{cor}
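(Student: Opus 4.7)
The strategy is to specialise Theorem~\ref{th:main} to the single-object set $\C=\{C\}$ and then convert the injective $\End_\A(C)$-module appearing there into the injective $k$-module $I$ supplied in the hypothesis, via the standard change-of-rings adjunction.

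First I would fix a finitely presented object $C$ and set $\Ga=\End_\A(C)$, which is a $k$-algebra since $\A$ is $k$-linear. For an injective $k$-module $I$, the right $\Ga$-module $\Hom_k(\Ga,I)$ (with action from the left action of $\Ga$ on itself) is injective, because the functor
\[M\longmapsto\Hom_\Ga(M,\Hom_k(\Ga,I))\cong\Hom_k(M,I)\]
is exact: the restriction of scalars is exact and $\Hom_k(-,I)$ is exact by injectivity of $I$. I would then define
\[\tau(C,I):=\tau_C(\Hom_k(\Ga,I))\]
using the Auslander-Reiten translate of Definition~\ref{de:tau} applied to $\C=\{C\}$ and to the injective $\Ga$-module just constructed.

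Next I would derive the natural isomorphism by composing Theorem~\ref{th:main} with the hom-adjunction. For any $X\in\A$, the $\Ga$-module $\Ext^1_\A(C,X)$ restricts to a $k$-module $\Ext^1_\A(C,X)$, and
\begin{align*}
\Hom_k(\Ext^1_\A(C,X),I)
&\cong\Hom_\Ga\bigl(\Ext^1_\A(C,X),\Hom_k(\Ga,I)\bigr)\\
&\cong\oHom_\A\bigl(X,\tau_C(\Hom_k(\Ga,I))\bigr)\\
&=\oHom_\A(X,\tau(C,I)).
\end{align*}
The first isomorphism is the coinduction adjunction for the ring map $k\to\Ga$, and the second is the content of Theorem~\ref{th:main}.

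Finally, for the bifunctoriality: functoriality in $I$ is immediate since $I\mapsto\Hom_k(\Ga,I)$ is a functor $\Inj\Mod k\to\Inj\Mod\Ga$ and $\t_C$ is a functor on injective $\Ga$-modules by Definition~\ref{de:tau}. The one genuinely subtle point is functoriality in $C$, since the ring $\Ga=\End_\A(C)$ varies with $C$, so a single functor $\t_\C$ cannot be used. I would handle this uniformly by invoking Yoneda: a morphism $\alpha\colon C\to C'$ in $\fp\A$ induces a natural transformation $\Ext^1_\A(C',-)\to\Ext^1_\A(-)$ of functors $\A\to\Ab$ (by precomposition), hence a natural transformation
\[\Hom_k(\Ext^1_\A(C,-),I)\longrightarrow\Hom_k(\Ext^1_\A(C',-),I)\]
for each $I$; under the representability just established, this corresponds to a unique morphism $\tau(C,I)\to\tau(C',I)$ in $\overline\A$, and the assignment is functorial because representing objects are functorial in the represented functor. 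This is the step that most needs care, but it reduces to bookkeeping once one agrees to define $\tau$ purely as a representing object rather than via the explicit construction for each ring separately.
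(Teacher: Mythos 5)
Your proof is correct and follows essentially the same route as the paper: both define $\tau(C,I)=\tau_C(\Hom_k(\Ga,I))$, observe that $\Hom_k(\Ga,-)$ sends injective $k$-modules to injective $\Ga$-modules, and obtain the isomorphism by composing the change-of-rings adjunction $\Hom_\Ga(M,\Hom_k(\Ga,I))\cong\Hom_k(M,I)$ with Theorem~\ref{th:main}; your extra discussion of functoriality in $C$ via representing objects is a welcome elaboration of a point the paper leaves implicit. (Minor typo: the natural transformation induced by $\alpha\colon C\to C'$ should read $\Ext^1_\A(C',-)\to\Ext^1_\A(C,-)$.)
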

\begin{proof}
Fix $C\in\fp\A$ and set $\Ga=\End_\A(C)$. Observe that $\Hom_k(\Ga,-)$
induces a functor $\Mod k\to\Mod\Ga$ that takes injectives to injectives.
Then we obtain $\tau(C,-)$ from the Auslander-Reiten translate $\tau_C$
 by setting
\[\tau(C,-)=\tau_C(\Hom_k(\Ga,-)).\qedhere\]
\end{proof}

\subsection*{A formula for the defect}

There is a reformulation of the isomorphism in Theorem~\ref{th:main}
in terms of the defect of an exact sequence.

\begin{thm}\label{th:defect}
  Let $\A$ be a locally finitely presented Grothendieck abelian
  category. Fix a set $\C$ of finitely presented objects and an exact
  sequence $\xi\colon 0\to X\to Y\to Z\to 0$ in $\A$. Then for every
  injective $\C$-module $I$ there is a natural isomorphism
\[\Hom(\xi^*|_\C,I)\cong\xi_*(\tau_\C(I)).\]
\end{thm}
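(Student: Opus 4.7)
The plan is to mimic the chain of isomorphisms in the proof of Theorem~\ref{th:main}, but with $\xi^*$ playing the role of $\Ext^1_\A(-,X)|_\C$, and then use the duality \eqref{eq:eff} between covariant and contravariant defects to move from the contravariant side (where $\xi^*$ lives) to the covariant side (where $\xi_*$ lives). The key point is that $\xi^*$ is a locally effaceable finitely presented functor on $\A^\op$, so it is in the domain of application for Lemma~\ref{le:eff-adj} and the duality $D_\A$.

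Concretely, I will first apply the coinduction adjunction \eqref{eq:coind} to rewrite
\[\Hom(\xi^*|_\C,I)\cong\Hom(\xi^*,\coind_\C I).\]
From the proof of Theorem~\ref{th:main} there is an exact sequence $\eta\colon 0\to\tau_\C(I)\to T_1\to\bar T_0\to 0$ with $T_1$ injective, and $\eff\coind_\C(I)\cong\eta^*\cong\Ext^1_\A(-,\tau_\C(I))$. Since $\xi^*\in\Eff(\A^\op,\Ab)$, Lemma~\ref{le:eff-adj} gives
\[\Hom(\xi^*,\coind_\C I)\cong\Hom(\xi^*,\eta^*).\]
Then the contravariant equivalence \eqref{eq:eff}, together with $D_\A(\xi_*)=\xi^*$ and $D_\A(\eta_*)=\eta^*$, produces
\[\Hom(\xi^*,\eta^*)\cong\Hom(\eta_*,\xi_*).\]

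The remaining step is to identify this last Hom group with $\xi_*(\tau_\C(I))$, which requires two observations. First, because $T_1$ is injective and $\tau_\C(I)\hookrightarrow T_1$, a morphism $\tau_\C(I)\to U$ extends to $T_1$ exactly when it factors through some injective, so $\eta_*\cong\oHom_\A(\tau_\C(I),-)$. Second, the functor $\xi_*$ factors through the stable category $\overline\A$: if $\varphi\colon W\to W'$ factors through an injective $E$, then for any $f\colon X\to W$ the composite $X\to E$ extends along the monomorphism $X\to Y$ (injectivity of $E$), so $\varphi\circ f$ is killed in $\xi_*(W')$. Given these two facts, the natural epimorphism $\Hom_\A(\tau_\C(I),-)\twoheadrightarrow\oHom_\A(\tau_\C(I),-)$ together with Yoneda's lemma supplies
\[\Hom(\oHom_\A(\tau_\C(I),-),\xi_*)\cong\xi_*(\tau_\C(I)),\]
and composing all four isomorphisms yields the claim.

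The main obstacle I anticipate is not any single hard step but rather bookkeeping: one must confirm that $\xi_*$ genuinely descends to $\overline\A$ so that the Yoneda identification is legal, and that the identification $\eta_*\cong\oHom_\A(\tau_\C(I),-)$ uses only the injectivity of $T_1$ (and not, say, its being an injective envelope). Both are mild consequences of the injectivity of $T_1$ and the fact that $X\hookrightarrow Y$ is a monomorphism in $\xi$, so they can be dispatched in a line each. Naturality in $\xi$ is automatic from the naturality of every isomorphism in the chain.
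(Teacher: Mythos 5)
Your argument is correct, but it follows a genuinely different route from the paper's. The paper deduces Theorem~\ref{th:defect} directly from the already-established Theorem~\ref{th:main}: it writes $\xi^*\cong\Ker\Ext^1_\A(-,\p)$ and $\xi_*\cong\Coker\oHom_\A(\p,-)$ for the monomorphism $\p\colon X\to Y$, uses the injectivity of $I$ to convert $\Hom(\Ker\Ext^1_\A(\C,\p),I)$ into $\Coker\Hom(\Ext^1_\A(\C,\p),I)$, and then applies the isomorphism of Theorem~\ref{th:main} to the two terms of $\p$. You instead rerun the internal argument of Theorem~\ref{th:main} with the effaceable functor $\xi^*$ in place of $\Ext^1_\A(-,X)$: the coinduction adjunction \eqref{eq:coind}, the $\eff$ adjunction of Lemma~\ref{le:eff-adj} together with \eqref{eq:eta}, the duality \eqref{eq:eff}, and a Yoneda computation for $\Hom(\oHom_\A(\t_\C(I),-),\xi_*)$. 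Your two ``bookkeeping'' points --- that $\xi_*$ annihilates morphisms factoring through injectives, and that $\eta_*\cong\oHom_\A(\t_\C(I),-)$ uses only the injectivity of $T_1$ --- are both correct and are precisely the mechanism behind Lemma~\ref{le:hom-ext}, which handles the special case where $Y$ is injective. What your route buys is that it proves, with no extra effort, the more general formula $\Hom(F|_\C,I)\cong D(F)(\t_\C(I))$ for every $F\in\Eff(\A^\op,\Ab)$, which the paper records only as a remark after the theorem; what the paper's route buys is brevity, since Theorem~\ref{th:main} is used as a black box and the exactness of $\Hom(-,I)$ for injective $I$ does the rest.
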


The above isomorphism can be rewritten using the duality
\eqref{eq:eff}. Thus we obtain for $F$ in $\Eff(\A^\op,\Ab)$ a natural
isomorphism
\[
\Hom(F|_\C,I)\cong D(F)(\tau_\C(I)).
\]

\begin{proof}[Proof of Theorem~\ref{th:defect}]
  Fix $\xi\colon 0\to X\xto{\p} Y\xto{\psi} Z\to 0$.
  It is not hard to see that
\[\xi_*\cong\Coker \oHom_\A(\p,-)\qquad\text{and}\qquad\xi^*\cong
\Ker\Ext_\A^1(-,\p).\]
Combining this with the isomorphism in Theorem~\ref{th:main} we get
\begin{align*}
\Hom(\xi^*|_\C,I) &\cong\Hom(\Ker\Ext_\A^1(\C,\p),I)\\
&\cong\Coker\Hom(\Ext_\A^1(\C,\p),I)\\
&\cong\Coker\oHom_\A(\p,\tau_\C(I))\\
&\cong \xi_*(\tau_\C(I)).\qedhere
\end{align*}
\end{proof}

\subsection*{Auslander-Reiten duality  modulo
  projectives}

Let $\A$ be an abelian category. Recall that the \emph{stable category
  modulo projectives} $\underline\A$ is obtained  from $\A$ by
identifying two morphisms $\p,\p'\colon X\to Y$ if 
\[\Ext^1_\A(\p,-)=\Ext^1_\A(\p',-).\]
We write $\uHom_\A(-,-)$ for the morphisms in $\underline\A$.

The following result is a refinement of Corollary~\ref{co:intro1} from
the introduction.

\begin{thm}\label{th:second-main}
  Let $\A$ be a locally finitely presented Grothendieck abelian
  category and $\C$ a set of finitely presented objects in $\A$.  Then
  for every injective $\C$-module $I$ there is a natural monomorphism
\begin{equation*}
\a\colon \Ext^1_\A(-,\t_\C(I))\lto  \Hom(\uHom_\A(\C,-),I).
\end{equation*}
This is an isomorphism for all $I$ if and only if for every object
$X\in\A$ there exists an epimorphism $\pi\colon X_\C\to X$ such that for
every morphism $\p\colon C\to X$ with $C\in\C$
\[\p\text{ factors through }\pi\qquad\iff\qquad\Ext^1_\A(\p,-)=0.\]
\end{thm}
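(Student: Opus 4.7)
The plan is to construct the natural map $\alpha$ from the data in the proof of Theorem~\ref{th:main}, verify its injectivity via a diagram chase, and then treat the two directions of the biconditional separately: a pushout construction for $(\Leftarrow)$, and an appropriate choice of injective $I$ for $(\Rightarrow)$.

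For the construction, I recall from the proof of Theorem~\ref{th:main} the exact sequence $\eta\colon 0\to\tau_\C(I)\to T_1\to\bar T_0\to 0$ with $T_1$ injective, and the identifications $\Ext^1_\A(-,\tau_\C(I))\cong\eta^*\cong\eff\coind_\C(I)$. The counit $\eff\coind_\C(I)\to\coind_\C(I)$ is induced by the inclusion $\bar T_0\hookrightarrow T_0$, and a diagram chase using that this inclusion is a monomorphism shows that the counit itself is a monomorphism; restricting to $\C$ yields a natural monomorphism $\Ext^1_\A(\C,\tau_\C(I))\hookrightarrow\coind_\C(I)|_\C=I$. For $\xi\in\Ext^1_\A(X,\tau_\C(I))$ and $\phi\colon C\to X$ with $C\in\C$, I define $\alpha(\xi)(\phi)$ to be the image of $\phi^*\xi$ in $I(C)$ under this monomorphism. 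Since $\Ext^1_\A(\phi,-)=0$ forces $\phi^*\xi=0$, the assignment descends to a $\C$-module map $\uHom_\A(\C,X)\to I$, and the same diagram chase applied with $X$ in place of $C$ shows that $\alpha$ itself is a monomorphism.

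For $(\Leftarrow)$, given an epimorphism $\pi\colon X_\C\to X$ with the iff property and kernel $K$, applying $\Hom_\A(C,-)$ to $0\to K\to X_\C\to X\to 0$ and using the hypothesis identifies the kernel of the connecting morphism $\Hom_\A(C,X)\to\Ext^1_\A(C,K)$ with the subgroup of $\phi\colon C\to X$ satisfying $\Ext^1_\A(\phi,-)=0$. This produces a natural $\C$-module monomorphism $\uHom_\A(\C,X)\hookrightarrow\Ext^1_\A(\C,K)$. Given $\beta\colon\uHom_\A(\C,X)\to I$, the injectivity of $I$ extends $\beta$ to $\bar\beta\colon\Ext^1_\A(\C,K)\to I$; by Theorem~\ref{th:main}, $\bar\beta$ corresponds to some $\bar\gamma\in\oHom_\A(K,\tau_\C(I))$; and pushing out the original sequence along any lift $\gamma\colon K\to\tau_\C(I)$ produces $\xi\in\Ext^1_\A(X,\tau_\C(I))$. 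Checking $\alpha(\xi)=\beta$ reduces to unwinding the explicit form of the isomorphism in Theorem~\ref{th:main}.

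For $(\Rightarrow)$, take $I$ to be the injective envelope of $\uHom_\A(\C,X)$ in $\Mod\C$, with inclusion $\iota\colon\uHom_\A(\C,X)\hookrightarrow I$. Since $\alpha$ is an isomorphism, there exists $\xi\in\Ext^1_\A(X,\tau_\C(I))$ with $\alpha(\xi)=\iota$, represented by an exact sequence $0\to\tau_\C(I)\to E\xto{\tilde\pi}X\to 0$. For $\phi\colon C\to X$ with $C\in\C$, the chain of equivalences ``$\phi$ factors through $\tilde\pi$ $\Leftrightarrow$ $\phi^*\xi=0$ in $\Ext^1_\A(C,\tau_\C(I))$ $\Leftrightarrow$ $\alpha(\xi)(\phi)=\iota([\phi])=0$ in $I(C)$ $\Leftrightarrow$ $[\phi]=0$ in $\uHom_\A(C,X)$ $\Leftrightarrow$ $\Ext^1_\A(\phi,-)=0$'' establishes the iff property for $\tilde\pi$; the second equivalence crucially uses the injectivity of $\Ext^1_\A(C,\tau_\C(I))\hookrightarrow I(C)$ from the construction, while the last two use that $\iota$ is a monomorphism. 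The main obstacle is therefore precisely that injectivity statement---equivalently, the monicity of the counit $\eff\coind_\C(I)\hookrightarrow\coind_\C(I)$---which underwrites both the monomorphism assertion on $\alpha$ and the $(\Rightarrow)$ direction.
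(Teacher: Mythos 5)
Your proof is correct, and its overall architecture coincides with the paper's: the map $\a$ is obtained from the monomorphism $\eff\coind_\C(I)\hookrightarrow\coind_\C(I)$ (induced by $\bar T_0\hookrightarrow T_0$, exactly as you say) together with the coinduction adjunction, its injectivity rests on that same monomorphism, and your $(\Rightarrow)$ argument -- realising the injective envelope $\iota\colon\uHom_\A(\C,X)\to I$ as an extension via $\a^{-1}$ and reading off the factorisation property from the monicity of $\iota$ and of $\Ext^1_\A(C,\t_\C(I))\hookrightarrow I(C)$ -- is exactly the paper's. The one place you genuinely deviate is $(\Leftarrow)$. The paper sets $F=\Coker\Hom_\A(-,\pi)$, notes that the hypothesis says precisely $F|_\C\cong\uHom_\A(-,X)|_\C$ and that $F$ is effaceable, and runs the choice-free functorial chain $\Hom(F|_\C,I)\cong\Hom(F,\coind_\C(I))\cong\Hom(F,\eff\coind_\C(I))\cong\Hom(F,\Ext^1_\A(-,\t_\C(I)))\subseteq\Ext^1_\A(X,\t_\C(I))$ using Lemma~\ref{le:eff-adj}. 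You instead work with elements: the connecting homomorphism embeds $\uHom_\A(\C,X)$ into $\Ext^1_\A(\C,K)$, injectivity of $I$ extends $\beta$, Theorem~\ref{th:main} applied at $K$ produces $\bar\gamma$, and a pushout yields $\xi$. Both arguments hinge on the same identification $\eff\coind_\C(I)\cong\Ext^1_\A(-,\t_\C(I))$; yours trades the $\eff$-adjunction for the injectivity of $I$ and a choice of lift $\gamma$, which costs the (routine, and correctly identified by you) check that $\a(\xi)=\beta$ follows from unwinding Theorem~\ref{th:main} -- this works because that isomorphism sends $[\gamma]$ to $\zeta\mapsto$ image of $\gamma_*\zeta$ in $I$, and $\phi^*\xi=\gamma_*\delta(\phi)$ depends only on the class of $\gamma$ in $\oHom_\A(K,\t_\C(I))$. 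The paper's version is shorter and avoids choices; yours exhibits the preimage of $\beta$ explicitly as a pushout, which is closer in spirit to how almost split sequences are constructed in practice.
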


The condition for $\a$ to be an isomorphism expresses the fact that
$\A$ has locally enough projective morphisms. Clearly, the condition
is satisfied when $\A$ has enough projective objects, but there are
also other examples; see Proposition~\ref{pr:proj-scheme}.

\begin{proof}[Proof of Theorem~\ref{th:second-main}]
For $X\in\A$ we have 
\begin{align*}
\Ext^1_\A(X,\t_\C(I))&\cong\Hom(\uHom_\A(-,X),\Ext^1_\A(-,\t_\C(I))\\
&\cong\Hom(\uHom_\A(-,X),\eff\coind_\C(I))\\
&\subseteq\Hom(\uHom_\A(-,X),\coind_\C(I))\\
&\cong\Hom(\uHom_\A(-,X)|_\C,I).
\end{align*}
Let us label the $n$th isomorphism by ($n$). Then (1) uses Yoneda's
lemma, (2) follows from \eqref{eq:eta}, and (3) is the adjunction
\eqref{eq:coind}.  

Now assume that there exists an epimorphism $\pi\colon X_\C\to X$ such
that for \[F=\Coker\Hom_\A(-,\pi)\] we have
$F|_\C\cong\uHom_\A(-,X)|_\C$. Then $F$ is effaceable and we can apply
Lemma~\ref{le:eff-adj}. Thus
\begin{align*}
\Hom(\uHom_\A(-,X)|_\C,I)&\cong \Hom(F|_\C,I)\\
&\cong \Hom(F,\coind_\C(I))\\
&\cong \Hom(F,\eff\coind_\C(I))\\
&\cong \Hom(F,\Ext^1_\A(-,\t_\C(I))\\
&\subseteq\Hom(\Hom_\A(-,X),\Ext^1_\A(-,\t_\C(I))\\
&\cong \Ext^1_\A(X,\t_\C(I))
\end{align*}
and we conclude that $\a$ is an isomorphism.

Finally, assume that $\a$ is an isomorphism for all $I$. An injective
envelope $\eta\colon\uHom_\A(\C,X)\to J$ corresponds under $\a$ to an extension
\[0\lto\t_\C(J)\lto X_\C\xto{\ \pi\ }X\lto 0.\]
From the functoriality of $\a$ one sees that any morphism
$\p\colon C\to X$ factors through $\pi$ iff the the composition of
$\uHom_\A(\C,\p)$ with $\eta$ equals zero. The latter condition means
$\Ext^1_\A(\p,-)=0$.
\end{proof}

\begin{rem}\label{re:pi}
For $X\in\A$ there exists an essentially unique morphism
$\pi\colon X_\C\to X$ having the following properties:
\begin{enumerate}
\item A morphism $\p\colon C\to X$ with $C\in\C$
factors through $\pi$ iff $\Ext^1_\A(\p,-)=0$.
\item  Every morphism $X'\to X$ satisfying (1) factors through $\pi$.
\item Every endomorphism
  $\e\colon X_\C\to X_\C$ satisfying $\pi\e=\pi$ is invertible.
\end{enumerate}
This follows from \cite[Theorem~1.1]{Kr2014}. Thus the crucial issue
for Theorem~\ref{th:second-main} is the property of $\pi$ to be an
epimorphism.
\end{rem}

\section{Auslander-Reiten duality for module categories}

This section is devoted to giving an explicit construction of the Auslander-Reiten translate
for module categories (Definition~\ref{de:DTr}). This is closely
related to the original construction of Auslander and Reiten
\cite{Au1978,AR1974} but it is not the same.

\subsection*{Stable module categories and the transpose}

Let $\La$ be a ring.  Given $\La$-modules $X$ and $Y$, we set
\[\uHom_\La(X,Y)=\Hom_\La(X,Y)/\{\p\mid \p\text{ factors through a
  projective module}\}\]
and
\[\oHom_\La(X,Y)=\Hom_\La(X,Y)/\{\p\mid \p\text{ factors through an
  injective module}\}.\]
Changing not the objects but the morphisms in $\Mod\La$, we obtain the
\emph{stable category modulo projectives} $\uMod\La$. Let
$\umod\La$ denote the full subcategory of finitely presented
$\La$-modules.  Analogously, the \emph{stable category modulo
  injectives} $\oMod\La$ is defined.

For a finitely presented $\La$-module $X$ having a projective presentation
\[P_1\lto P_0\lto X\lto 0\]
the \emph{transpose} $\Tr X$ is defined by the exactness of the
following sequence of $\La^\op$-modules
\[P_0^*\lto P_1^*\lto \Tr X\lto 0\]
where $P^*=\Hom_\La(P,\La)$.

\begin{lem}\label{le:Tr}
The transpose induces mutually inverse equivalences
\[(\umod\La)^\op\xto{\ \sim\ }\umod(\La^\op)\qquad\text{and}\qquad
\umod(\La^\op)\xto{\ \sim\ }(\umod\La)^\op.\]
\end{lem}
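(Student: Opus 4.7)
The plan is to verify in turn that (a) $\Tr$ is well-defined as an object assignment $\mod\La \to \umod\La^{\op}$, (b) $\Tr$ is functorial on morphisms modulo those factoring through projectives, and (c) the double transpose $\Tr\comp\Tr$ is naturally isomorphic to the identity on $\umod\La$.

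For (a), the basic input is the $\La$-$\La^{\op}$-duality on finitely generated projectives: for $P\in\proj\La$ the module $P^*=\Hom_\La(P,\La)$ lies in $\proj\La^{\op}$, and the canonical evaluation map $P\to P^{**}$ is an isomorphism. Given a projective presentation $P_1\xto{d} P_0\to X\to 0$, applying $(-)^*$ produces the exact sequence $P_0^*\xto{d^*} P_1^*\to \Tr X\to 0$, which exhibits $\Tr X$ as a finitely presented $\La^{\op}$-module together with a distinguished projective presentation. A Schanuel-type comparison of two projective presentations of $X$ shows that the resulting transposes differ only by direct summands from $\proj\La^{\op}$, hence are isomorphic in $\umod\La^{\op}$.

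For (b), any morphism $f\colon X\to Y$ lifts to a chain map between chosen projective presentations, uniquely up to homotopy on $P_1$; applying $(-)^*$ and passing to the quotient gives a well-defined morphism $\Tr f\colon \Tr Y\to \Tr X$ in $\umod\La^{\op}$. A homotopy on the presentations dualises to a homotopy on the dual presentations, so maps that are homotopic give the same $\Tr f$ in $\umod\La^{\op}$. Moreover $f$ factors through a projective iff a lift is null-homotopic in the relevant sense, which after dualising yields $\Tr f = 0$ in $\umod\La^{\op}$; this makes $\Tr$ a well-defined additive functor $(\umod\La)^{\op}\to\umod\La^{\op}$.

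For (c), the presentation $P_0^*\to P_1^*\to \Tr X\to 0$ obtained in (a) is itself projective, so applying $(-)^*$ once more yields $P_1^{**}\to P_0^{**}\to \Tr\Tr X\to 0$. Via the canonical identifications $P_i^{**}\cong P_i$ this recovers the original presentation of $X$, giving $\Tr\Tr X\cong X$ naturally in $\umod\La$. The entire construction is symmetric in $\La$ and $\La^{\op}$, so the same argument applied over $\La^{\op}$ shows that the other composite is likewise naturally isomorphic to the identity. The main obstacle, as usual for the transpose, is the bookkeeping around well-definedness in (a) and (b): tracking how a change of projective presentation modifies $\Tr X$ only by projective summands of the form $Q^*$, and checking naturality of the isomorphism $\Tr\Tr\cong\Id$. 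Both reduce in the end to the reflexivity $P\cong P^{**}$ combined with one Schanuel-type comparison.
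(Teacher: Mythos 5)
The paper does not actually prove this lemma; it simply cites Proposition~2.6 of Auslander--Bridger \cite{AB1969}. Your argument is essentially the standard proof underlying that citation, and its overall structure --- well-definedness of $\Tr X$ up to projective summands via a Schanuel-type comparison, functoriality on the stable category, and $\Tr\Tr\cong\Id$ via the reflexivity $P\xto{\sim}P^{**}$ of finitely generated projectives --- is correct. One point in step (b) is stated too loosely: a lift $(f_0,f_1)$ of $f\colon X\to Y$ to projective presentations is \emph{not} unique up to chain homotopy in the naive sense, because a presentation is only right exact. Two lifts differ by $s\,d_P$ (with $s\colon P_0\to Q_1$) \emph{plus} a map $h\colon P_1\to Q_1$ landing in $\Ker(Q_1\to Q_0)$. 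The first summand dualises into the image of $d_P^*$ and hence dies in $\Coker(d_P^*)=\Tr X$; for the second, $h^*d_Q^*=0$ shows that $h^*$ factors through $\Coker(d_Q^*)=\Tr Y$, so the induced discrepancy $\Tr Y\to\Tr X$ factors through the projective $P_1^*$ and vanishes in $\umod(\La^\op)$. With that extra half-step your proof is complete and matches the classical one; the paper buys the same conclusion for free by deferring to the literature.
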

\begin{proof}
 See Proposition~2.6 in \cite{AB1969}.
\end{proof}

\subsection*{Injective modules over quotient rings}

Let $\pi\colon \Ga\to\bar\Ga$ be a surjective ring homomorphism. Then
restriction of scalars along $\pi$ yields a functor
$\Mod\bar\Ga\to\Mod \Ga$ that is fully faithful.  This functor has a
right adjoint which takes a $\Ga$-module $I$ to
$\bar I=\Hom_\Ga(\bar\Ga,I)$. Thus $\pi$ induces a monomorphism
$\e_I\colon\bar I\to I$ in $\Mod\Ga$ which identifies $\bar I$ with
the largest submodule of $I$ that is annihilated by $\Ker\pi$ and
gives the isomorphism
\begin{equation}\label{eq:DTr-inj}
\Hom_{\bar\Ga}(-,\bar I)\xto{\sim} \Hom_\Ga(-,I).
\end{equation}

Let $\bar I\to E(\bar I)$ denote an injective envelope in $\Mod\Ga$. 

\begin{lem}\label{le:DTr-inj}
For  an injective $\Ga$-module $I$, we have in $\Mod\bar\Ga$
\[\bar I\xleftarrow{\sim} \Hom_\Ga(\bar\Ga,\bar I) \xto{\sim}\Hom_\Ga(\bar\Ga,E(\bar I)).\]
\end{lem}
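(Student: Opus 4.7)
\textbf{Proof plan for Lemma~\ref{le:DTr-inj}.}

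The first isomorphism $\Hom_\Ga(\bar\Ga,\bar I)\xto{\sim}\bar I$ is essentially formal. Since $\bar I$ is annihilated by $\Ker\pi$ and restriction of scalars along $\pi$ is fully faithful, any $\Ga$-linear map $\bar\Ga\to\bar I$ is automatically $\bar\Ga$-linear. Evaluation at $1\in\bar\Ga$ then identifies $\Hom_\Ga(\bar\Ga,\bar I)=\Hom_{\bar\Ga}(\bar\Ga,\bar I)$ with $\bar I$, naturally in $\bar I$.

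For the second isomorphism, the key input is that $\bar I$ is injective as a $\bar\Ga$-module. Indeed, restriction of scalars $\Mod\bar\Ga\to\Mod\Ga$ is exact, so its right adjoint $\Hom_\Ga(\bar\Ga,-)\colon\Mod\Ga\to\Mod\bar\Ga$ preserves injectivity (compare Lemma~\ref{le:coind}), and $\bar I=\Hom_\Ga(\bar\Ga,I)$ is the value of this adjoint on the injective $\Ga$-module $I$.

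Now observe that for any $\Ga$-module $M$ the group $\Hom_\Ga(\bar\Ga,M)$ coincides with the largest $\bar\Ga$-submodule $M^{\Ker\pi}$ of $M$ annihilated by $\Ker\pi$. The map in question therefore reduces to the inclusion
\[\bar I=(\bar I)^{\Ker\pi}\hookrightarrow E(\bar I)^{\Ker\pi},\]
and surjectivity of the second map amounts to the equality $E(\bar I)^{\Ker\pi}=\bar I$. I would show that $\bar I$ is an essential $\bar\Ga$-submodule of $E(\bar I)^{\Ker\pi}$: if $0\neq m\in E(\bar I)^{\Ker\pi}$, then $\Ker\pi\cdot m=0$ forces $\Ga m=\bar\Ga m$, and essentialness of $\bar I$ in $E(\bar I)$ over $\Ga$ gives $\bar I\cap \Ga m=\bar I\cap\bar\Ga m\neq 0$. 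Since $\bar I$ is injective in $\Mod\bar\Ga$, it admits no proper essential extension there, and hence $E(\bar I)^{\Ker\pi}=\bar I$.

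The only non-formal step is this transfer of essentialness from $\Mod\Ga$ to $\Mod\bar\Ga$, and it works precisely because cyclic $\Ga$-submodules generated by $\Ker\pi$-torsion elements are automatically $\bar\Ga$-submodules. Combining the two isomorphisms yields the lemma.
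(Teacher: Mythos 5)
Your proof is correct and follows essentially the same route as the paper: both arguments rest on the two facts that $\Hom_\Ga(\bar\Ga,-)$ preserves injectivity (being right adjoint to the exact restriction functor) and that $\bar I\to E(\bar I)$ is essential in $\Mod\Ga$. The only cosmetic difference is in the final step, where the paper splits off a direct complement in $\Mod\bar\Ga$ and kills it by essentialness, while you show $\bar I\subseteq E(\bar I)^{\Ker\pi}$ is an essential extension of $\bar\Ga$-modules and invoke that an injective module admits no proper essential extension.
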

\begin{proof}
  The first isomorphism is given by $\e_{\bar I}$. The functor
  $\Hom_\Ga(\bar\Ga,-)$ preserves injectivity since it is right
  adjoint to an exact functor. Thus
  $\bar I\to\Hom_\Ga(\bar\Ga,E(\bar I))$ is a split monomorphism, and
  it is an isomorphism, because the composition with $\e_{E(\bar I)}$
  is an injective envelope.
\end{proof}

\subsection*{The dual of the transpose}

Let $\La$ be a ring and fix a finitely presented $\La$-module $C$. Set
$\Ga=\End_\La(C)$ and $\bar\Ga=\uEnd_\La(C)$.  The transpose $\Tr C$
is a $\La^\op$-module and there is an isomorphism
\[\g\colon\uEnd_\La(C)\xto{\ \sim\ }\uEnd_{\La^\op}(\Tr C)^\op\] by Lemma~\ref{le:Tr}.
For an injective $\Ga$-module $I$ we view $\bar I=\Hom_\Ga(\bar\Ga,I)$
as an $\End_{\La^\op}(\Tr C)^\op$-module via $\g$ and denote by
$E(\bar I)$ an injective envelope. This yields the assignment
\[\Inj \Mod\End_\La(C)\lto \Inj \Mod\End_{\La^\op}(\Tr C)^\op,\qquad
I\mapsto E(\bar I).\]

\begin{defn}\label{de:DTr}
  The \emph{dual of the transpose} (or
  \emph{Auslander-Reiten translate}) of $C$ with respect to $I$ is
  the $\La$-module
\[\t_C(I):=\Hom_{\End_{\La^\op}(\Tr C)^\op}(\Tr C,E(\bar I)).\]
\end{defn}

In Corollary~\ref{co:ARformula} we will see that this definition is
consistent with the previous Definition~\ref{de:tau}. In particular,
the definition does not depend on any choice when $\t_C(I)$ is viewed
as an object in $\oMod\La$.

The definition of $\t_C(I)$ is a variation of Auslander's definition
of the dual of the transpose in \cite[I.3]{Au1978}. To be precise,
Auslander starts with an injective module over
$\End_{\La^\op}(\Tr C)^\op$ whereas the above definition takes as
input an injective module over $\End_\La(C)$. Keeping this difference
in mind, the following is the analogue of Theorem~III.4.1 in
\cite{Au1978}.

\begin{thm}\label{th:defect1}
  Let $\xi\colon 0\to X\to Y\to Z\to 0$ be an exact sequence of
  $\La$-modules and $C$ a finitely presented $\La$-module. 
For an injective $\End_\La(C)$-module $I$,  there
  is a natural isomorphism
\[\Hom_{\End_\La(C)}(\xi^*(C),I)\cong \xi_*(\t_C(I)).\]
\end{thm}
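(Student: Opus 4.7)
The plan is to reduce the statement to the abstract defect formula of Theorem~\ref{th:defect}. Writing $\Ga=\End_\La(C)$ and applying that theorem to $\A=\Mod\La$ and $\C=\{C\}$, one obtains a natural isomorphism
\[
\Hom_\Ga(\xi^*(C),I)\cong\xi_*(\tau_C(I)),
\]
where $\tau_C(I)$ is the abstract Auslander--Reiten translate of Definition~\ref{de:tau}. Since for $E$ injective the defining three-term Hom-sequence is exact, $\xi_*(E)=0$, so $\xi_*$ factors through $\oMod\La$. Hence it will suffice to show that $\t_C(I)\cong\tau_C(I)$ in $\oMod\La$.

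To carry out this identification, I would fix a projective presentation $P_1\to P_0\to C\to 0$ with $P_0,P_1$ finitely generated projective, yielding the standard presentation $P_0^*\to P_1^*\to \Tr C\to 0$ of the transpose as a $\La^{\op}$-module. Set $R=\End_{\La^{\op}}(\Tr C)^{\op}$. The key computational tool is the tensor-Hom adjunction
\[
\Hom_\La(M,\Hom_R(\Tr C,N))\cong\Hom_R(\Tr C\otimes_\La M,N)
\]
for right $R$-modules $N$, combined with the canonical isomorphism $P_i^*\otimes_\La M\cong\Hom_\La(P_i,M)$ for $P_i$ finitely generated projective. Taking $N=E(\bar I)$ and using the isomorphism $\bar\Ga\cong\uEnd_{\La^{\op}}(\Tr C)^{\op}$ of Lemma~\ref{le:Tr} together with Lemma~\ref{le:DTr-inj} to match the $\Ga$-structure on $\Hom_\La(C,M)$ with the $R$-structure on $\Tr C\otimes_\La M$, one can write down a minimal projective presentation of $\coind_C(I)(M)=\Hom_\Ga(\Hom_\La(C,M),I)$ whose second syzygy is $\t_C(I)$, thereby identifying $\tau_C(I)=T_2$ with $\t_C(I)$ in $\oMod\La$.

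The hard part is the careful bookkeeping between the right $\Ga$-action on $\Hom_\La(C,M)$ and the right $R$-action on $\Tr C\otimes_\La M$: while $\Ga$ and $R$ agree only stably (via Lemma~\ref{le:Tr}), the passage to the injective envelope $E(\bar I)$ of $\bar I=\Hom_\Ga(\bar\Ga,I)$ in Definition~\ref{de:DTr} is precisely what reconciles this mismatch and ensures that $\t_C(I)=\Hom_R(\Tr C,E(\bar I))$ is a well-defined $\La$-module whose class in $\oMod\La$ coincides with $\tau_C(I)$. Once this identification is in hand, the desired formula follows immediately from Theorem~\ref{th:defect}. The overall structure mirrors Auslander's classical argument for Theorem~III.4.1 in \cite{Au1978}, with the injective envelope in Definition~\ref{de:DTr} playing the role of the $k$-dual $D$ in the Artin-algebra setting.
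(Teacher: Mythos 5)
Your reduction to Theorem~\ref{th:defect} (applied to $\A=\Mod\La$, $\C=\{C\}$) is a legitimate strategy, and the observation that $\xi_*$ kills injectives and hence descends to $\oMod\La$ is correct; but this shifts the entire content of the theorem into the claim that the explicit module $\t_C(I)=\Hom_{\Si}(\Tr C,E(\bar I))$ of Definition~\ref{de:DTr}, where $\Si=\End_{\La^\op}(\Tr C)^\op$, agrees in $\oMod\La$ with the abstract translate $\tau_C(I)$ of Definition~\ref{de:tau} --- and that claim is asserted rather than proved. Be aware that in the paper this consistency of the two definitions is \emph{deduced from} Theorem~\ref{th:defect1} (via Corollary~\ref{co:ARformula}, choosing $\xi$ with $Y$ injective and invoking uniqueness of representing objects), so assuming it as an input without an independent argument is close to circular. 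The paper instead proves Theorem~\ref{th:defect1} directly, with no appeal to Theorem~\ref{th:defect}: the snake lemma applied to the four-term sequences
\[0\lto\Hom_\La(C,-)\lto\Hom_\La(P_0,-)\lto\Hom_\La(P_1,-)\lto
-\otimes_\La\Tr C\lto 0\]
evaluated at $\xi$ yields $\xi^*(C)\cong\Coker\uHom_\La(C,\psi)\cong\Ker(\p\otimes_\La\Tr C)$ as modules over $\bar\Ga\cong\bar\Si$, after which Lemma~\ref{le:DTr-inj}, exactness of $\Hom_\Si(-,E(\bar I))$, and tensor-hom adjunction finish the computation.

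The concrete obstruction to your sketch is the following. To produce a presentation $\Hom_\La(-,T_1)\to\Hom_\La(-,T_0)\to\coind_C(I)\to 0$ with second syzygy $\t_C(I)$ and $T_0,T_1$ injective, one would like to dualise the four-term sequence above termwise against $I$ (or against $E(\bar I)$); but the middle terms $\Hom_\La(P_i,M)\cong M\otimes_\La P_i^*$ carry no natural $\Ga$- or $\Si$-module structure, so neither $\Hom_\Ga(-,I)$ nor $\Hom_\Si(-,E(\bar I))$ can be applied to the whole sequence. This dualisation is available only in the relative situation $I=\Hom_k(\Ga,J)$ of Corollary~\ref{co:base}, where $\Hom_\Ga(-,I)\cong\Hom_k(-,J)$ applies to arbitrary abelian groups and one recovers the classical $\tau(C,J)=\Hom_k(\Tr C,J)$. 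For a general injective $\Ga$-module $I$, reconciling the $\Ga$-action on the left end with the $\Si$-action on the right end --- which agree only through $\bar\Ga\cong\bar\Si$, and must then be re-injectivised via $E(\bar I)$ --- is precisely the substance of the paper's proof, not bookkeeping that can be deferred. (A minor point: you do not actually need your presentation to be minimal; it suffices that $T_0,T_1$ be injective, since by Corollary~\ref{co:domdim} and a Schanuel-type comparison the second syzygy is then well defined in $\oMod\La$.)
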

\begin{proof}
  Set $\Si=\End_{\La^\op}(\Tr C)^\op$ and
  $\bar \Si=\uEnd_{\La^\op}(\Tr C)^\op$. Note that
  $\bar\Ga\cong\bar\Si$ by Lemma~\ref{le:Tr}.  In the following we use
  that $\Hom_\Si(\bar\Si,E(\bar I))\cong \bar I$ by
  Lemma~\ref{le:DTr-inj}. Also, the isomorphism \eqref{eq:DTr-inj} is
  used for $\Ga$ and $\Si$.

We fix an exact sequence
\[\xi\colon 0\lto X\xto{\ \p\ } Y\xto{\ \psi\ } Z\lto 0.\]
A projective presentation $P_1\to P_0\to C\to 0$ induces the following
commutative diagram with exact rows.
\[\begin{tikzcd}[column sep=small]
{}&&X\otimes_\La P_0^*\arrow{r}\arrow{d}{\wr}&X\otimes_\La P_1^*\arrow{r}\arrow{d}{\wr}&X\otimes_\La\Tr C\arrow{r}&0\\
0 \arrow{r}&\Hom_\La(C,X) \arrow{r}&\Hom_\La(P_0,X)
\arrow{r}&\Hom_\La(P_1,X)
\end{tikzcd}\]
Therefore $\xi$ induces the following commutative diagram with exact rows
and colums.
\[\begin{tikzcd}[column sep=small]
&0\arrow{d}&0\arrow{d}&0\arrow{d}\\
0\arrow{r}&\Hom_\La(C,X)\arrow{r}\arrow{d}&\Hom_\La(P_0,X)\arrow{r}\arrow{d}&\Hom_\La(P_1,X)\arrow{r}\arrow{d}&X\otimes_\La\Tr
C\arrow{r}\arrow{d}&0\\
0\arrow{r}&\Hom_\La(C,Y)\arrow{r}\arrow{d}&\Hom_\La(P_0,Y)\arrow{r}\arrow{d}&\Hom_\La(P_1,Y)\arrow{r}\arrow{d}&Y\otimes_\La\Tr
C\arrow{r}\arrow{d}&0\\
0\arrow{r}&\Hom_\La(C,Z)\arrow{r}&\Hom_\La(P_0,Z)\arrow{r}\arrow{d}&\Hom_\La(P_1,Z)\arrow{r}\arrow{d}&Z\otimes_\La\Tr
C\arrow{r}\arrow{d}&0\\
&&0&0&0
\end{tikzcd}\]
Now we apply the snake lemma and use adjunctions plus the isomorphism from
Lemma~\ref{le:DTr-inj}, as explained above. This yields
\begin{align*}
\Hom_\Ga(\xi^*(C),I) &=\Hom_\Ga(\Coker\Hom_\La(C,\psi),I)\\
 &\cong\Hom_\Ga(\Coker\uHom_\La(C,\psi),I)\\
&\cong\Hom_{\bar\Ga}(\Coker\uHom_\La(C,\psi),\bar I)\\
&\cong \Hom_{\bar\Si}(\Ker (\p\otimes_\La\Tr C),\bar I)\\
&\cong \Hom_{\Si}(\Ker (\p\otimes_\La\Tr C),E(\bar I))\\
&\cong \Coker \Hom_\Si(\p\otimes_\La\Tr C,E(\bar I))\\
&\cong\Coker \Hom_\La(\p,\Hom_\Si(\Tr C,E(\bar I)))\\
&=\xi_*(\t_C(I))
\end{align*}
and the proof is complete.
\end{proof}

The following result says that the dual of the transpose $\t_C(I)$ is
unique up to morphisms that factor through an injective module; it is
the representing object of the functor
\[\Hom_{\End_\La(C)}(\Ext^1_\La(C,-),I).\] For the analogues in
Auslander's work \cite{Au1978}, see Proposition~I.3.4 and
Corollary~III.4.3.

\begin{cor}\label{co:ARformula}
Let $\La$ be a ring and $C$ a finitely presented $\La$-module. Sending
an injective $\End_\La(C)$-module $I$ to $\t_C(I)$ gives a functor
\[\t_C\colon\Inj\Mod\End_\La(C)\lto\oMod\La\]
such that 
\begin{align}
\Hom_{\End_\La(C)}(\Ext^1_\La(C,-),I)&\cong\oHom_\La(-,\t_C(I))
\intertext{and}
\label{eq:Ext}\Hom_{\End_\La(C)}(\uHom_\La(C,-),I)&\cong\Ext^1_\La(-,\t_C(I)).
\end{align}
\end{cor}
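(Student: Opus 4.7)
The plan is to deduce both isomorphisms from the defect formula in Theorem \ref{th:defect1} by specialising the exact sequence $\xi$ in two complementary ways. Once isomorphism (1) is established, functoriality of the assignment $\t_C\colon\Inj\Mod\End_\La(C)\to\oMod\La$ follows from Yoneda's lemma, since a representing object in a category is functorial in the represented functor.

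For isomorphism (1), given $X\in\Mod\La$, I would fix a monomorphism $\iota\colon X\to I^0$ into an injective module and complete it to an exact sequence $\xi\colon 0\to X\xto{\iota} I^0\to Z\to 0$. The long exact $\Ext$-sequence combined with $\Ext^1_\La(C,I^0)=0$ yields $\xi^*(C)\cong\Ext^1_\La(C,X)$. On the other hand, $\xi_*(Y)$ equals $\Hom_\La(X,Y)$ modulo maps factoring through $\iota$; since $I^0$ is injective, such maps are exactly those factoring through some injective module (any factorisation through an injective $J$ extends $\iota$ to $I^0\to J$ by injectivity of $J$), so $\xi_*(Y)\cong\oHom_\La(X,Y)$. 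Theorem \ref{th:defect1} applied to $\xi$ then gives (1).

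For isomorphism (2), I would dually fix an epimorphism $P\to X$ from a projective module and form $\eta\colon 0\to K\to P\to X\to 0$. The long exact $\Ext$-sequence, using $\Ext^1_\La(P,-)=0$, gives $\eta_*(Y)\cong\Ext^1_\La(X,Y)$. For the contravariant defect, a morphism $\p\colon C\to X$ factors through a projective iff it lifts to $P$ (any projective intermediate module $Q$ in a factorisation $C\to Q\to X$ admits a lift $Q\to P$ along the epimorphism $P\to X$), so $\eta^*(C)\cong\uHom_\La(C,X)$. Applying Theorem \ref{th:defect1} once more yields (2).

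The only step that requires genuine care is the identification of $\xi_*$ and $\eta^*(C)$ with the stable Hom functors; this rests on the basic observation that morphisms factoring through \emph{any} injective (resp.\ projective) module are exactly those factoring through a single fixed mono into (resp.\ epi from) an object of that class. There is no substantial obstacle here: the corollary amounts essentially to repackaging Theorem \ref{th:defect1} in two specialisations, and the consistency with Definition \ref{de:tau} asserted just after Definition \ref{de:DTr} is a consequence of (1) together with the uniqueness of representing objects in $\oMod\La$.
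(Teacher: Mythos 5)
Your proposal is correct and follows essentially the same route as the paper: both isomorphisms are obtained by specialising Theorem~\ref{th:defect1} to an exact sequence with injective middle term (giving $\xi^*(C)\cong\Ext^1_\La(C,X)$ and $\xi_*\cong\oHom_\La(X,-)$) and to one with projective middle term (giving $\eta^*(C)\cong\uHom_\La(C,X)$ and $\eta_*\cong\Ext^1_\La(X,-)$), with functoriality of $\t_C$ then read off from the representability. The paper's proof is just a terser version of your argument, omitting the verifications of the defect identifications that you spell out.
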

\begin{proof}
  Both isomorphisms follow from the isomorphism in
  Theorem~\ref{th:defect1} by choosing an appropriate exact sequence
  $\xi\colon 0\to X\to Y\to Z\to 0$. In the first case choose $Y$ to
  be injective. Then $\xi_*\cong\oHom_\La(X,-)$ and
  $\xi^*=\Ext^1_\La(-,X)$.  In the second case choose $Y$ to be
  projective. Then $\xi^*\cong\uHom_\La(-,Z)$ and
  $\xi_*\cong\Ext^1_\La(Z,-)$. In particular, the assignment
  $I\mapsto\t_C(I)$ is functorial.
\end{proof}

We have identified the Auslander-Reiten translate $\tau_C(I)$ as the
representing object of a specific functor. The following remark
suggests that $\tau_C(I)$ is the `universal kernel' of certain
epimorphisms that are right determined by $C$.

\begin{rem}
  Fix a finitely presented $\La$-module $C$ and recall from
  \cite{Au1978} that a morphism $\a\colon X\to Y$ is \emph{right
    $C$-determined} if for any morphism $\a'\colon X'\to Y$ we have
\[\Im\Hom_\La(C,\a')\subseteq
\Im\Hom_\La(C,\a)\quad\iff\quad\text{$\a'$ factors through $\a$}.\]
Now let
\[\xi\colon 0\lto\t_C(I)\lto X\xto{\ \a\ }Y\lto 0\]
be an exact sequence of $\La$-modules and $H$ the kernel of an
$\End_\La(C)$-linear map
\[\Hom_\La(C,Y)\twoheadrightarrow \uHom_\La(C,-)\xto{\eta} I.\]
Then $\xi$ corresponds to $\eta$ under the isomorphism \eqref{eq:Ext} if
and only if $\a$ is right $C$-determined with
$H=\Im\Hom_\La(C,\a)$. This follows from the functoriality of \eqref{eq:Ext}.
\end{rem}

\begin{rem}
In Theorem~\ref{th:defect1} one can replace the finitely presented
$\La$-module $C$ by a set of finitely presented modules, as in Theorem~\ref{th:defect}.
\end{rem}

\subsection*{Examples} 
(1) Let $\La$ be a $k$-algebra over a commutative ring $k$. For a
finitely presented $\La$-module $C$ and an injective $k$-module $I$ we
have $\tau(C,I)=\Hom_k(\Tr C,I)$ and therefore
\[\Hom_k(\Ext^1_\La(C,-),I)\cong\oHom_\La(-,\t(C,I)).\]
For an elementary proof see \cite{Kr2003}.

(2) Let $\La$ be a noetherian algebra over a complete local ring $k$.
Then Matlis duality over $k$ composed with the transpose $\Tr$
identifies the noetherian $\La$-modules (modulo projectives) with the
artinian $\La$-modules (modulo injectives); see \cite{Au1978}.

(3) Let $\La$ be a Dededkind domain. Then the
Auslander-Reiten translate identifies the noetherian $\La$-modules
(modulo projectives) with the artinian $\La$-modules (modulo
injectives).

\section{Auslander-Reiten duality for quasi-coherent sheaves}

This section is devoted to giving an explicit construction of the
Auslander-Reiten translate for the category of quasi-coherent modules
over a scheme (Definition~\ref{de:translate-scheme}). In particular,
we explain the connection to Serre duality for a non-singular
projective scheme over a field.

Let $k$ be a field and $\bbX$ a quasi-compact and quasi-separated
scheme over $k$, given by a morphism $f\colon\bbX \to\bbY=\Spec k$.
Consider the category $\Qcoh\bbX$ of quasi-coherent
$\mathcal O_\bbX$-modules and note that every object is a filtered
colimit of finitely presented $\mathcal O_\bbX$-modules
\cite[I.6.9.12]{GD1971}. Thus the Grothendieck abelian category
$\Qcoh\bbX$ is locally finitely presented.

From now on assume that the scheme $\bbX$ is locally noetherian. Then
an $\mathcal O_\bbX$-module is coherent if and only if it is finitely
presented.

For an abelian category $\A$ let $\bfD(\A)$ denote its derived
category. In \cite{Ne1996}, Neeman shows that Grothendieck duality is
a formal consequence of Brown representabilty, that is, the derived
direct image functor
\[\mathbf Rf_*\colon\mathbf D(\Qcoh\bbX)\lto\mathbf D(\Qcoh\bbY)\]
has a right adjoint
\[f^!\colon\mathbf D(\Qcoh\bbY)\lto\mathbf D(\Qcoh\bbX).\]

Now fix objects $C,X\in\Qcoh\bbX$ and suppose that $C$ is
coherent. 

The following lemma may be deduced from a more general statement in
SGA 6; see Proposition~3.7 in \cite[Exp.~I]{BGI1971}. 

\begin{lem}\label{le:rigidity}
  There is in $\bfD(\Qcoh\bbX)$ a natural isomorphism
\[\s\colon X\Lotimes_{\bbX}\RHOM_\bbX(C,\mathcal O_\bbX)\xto{\sim} \RHOM_\bbX(C,X).\]
\end{lem}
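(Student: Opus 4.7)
The plan is to construct the natural evaluation morphism and verify it is an isomorphism by localising and reducing to a finite projective resolution of $C$.

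First I would define $\s$ as the derived enhancement of the classical evaluation map $X\otimes_\bbX\HOM_\bbX(C,\mathcal O_\bbX)\to\HOM_\bbX(C,X)$, $x\otimes\p\mapsto(c\mapsto \p(c)\cdot x)$, lifting it to $\bfD(\Qcoh\bbX)$ by means of K-flat and K-injective resolutions (which exist on a Grothendieck abelian category). Naturality in $X$ is automatic from the construction.

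Next, since the assertion is about an isomorphism in $\bfD(\Qcoh\bbX)$, I would check it locally, reducing to an affine open $U=\Spec R$ with $R$ noetherian. There $C$ corresponds to a finitely generated $R$-module $M$, and the claim becomes that the evaluation map
\[N\Lotimes_R\RHom_R(M,R)\lto\RHom_R(M,N)\]
is a quasi-isomorphism for the $R$-module $N$ representing $X|_U$.

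Now I would invoke perfectness of $C$ (this is the precise content of the hypothesis under which SGA 6, Exp.~I, Prop.~3.7 applies, and it is automatic in the paper's intended applications where $\bbX$ is regular, so that every coherent sheaf is locally resolved by a finite complex of vector bundles). This yields a finite resolution $P_\bullet\to M$ with each $P_i$ finitely generated projective. Since both sides of the evaluation map are triangulated functors in the first variable commuting with finite direct sums, it suffices to verify $\s$ for $M=R$, where it reduces to the tautology $N\Lotimes_R R\cong N\cong\RHom_R(R,N)$. A standard way-out argument then promotes the isomorphism along the resolution back to $M$.

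The main obstacle is the step that requires $C$ to be perfect: this is what allows the reduction to the free case and is the only place geometric input beyond local noetherianness is needed. Once perfectness is in hand the remainder of the argument is formal.
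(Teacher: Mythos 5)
There is a gap: you prove the lemma under an extra hypothesis that the paper does not make. The lemma is stated for an arbitrary coherent $C$ on a locally noetherian scheme $\bbX$; no regularity of $\bbX$ and no perfectness of $C$ is assumed (regularity enters the paper only later, to simplify $\RHOM_\bbX(\RHOM_\bbX(C,\mathcal O_\bbX),f^!k)$ to $C\Lotimes_\bbX f^!k$). Your claim that perfectness ``is the precise content of the hypothesis under which SGA 6 applies'' and ``the only place geometric input beyond local noetherianness is needed'' is where the argument goes wrong: a finite resolution by vector bundles need not exist, and it is also not needed. What is actually needed is that $M$ admits a possibly infinite resolution $\cdots\to P_1\to P_0\to M\to 0$ by \emph{finitely generated} projectives (automatic over a noetherian ring for $M$ finitely generated), together with the fact that the second argument $N$ is concentrated in degree zero (more generally, bounded below).

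With such a resolution, $\Hom_R(P_\bullet,R)$ is a bounded-below complex of finitely generated projectives, hence K-flat (it is a filtered colimit of its brutal truncations, which are bounded complexes of flats), so $N\Lotimes_R\RHom_R(M,R)$ is computed by $N\otimes_R\Hom_R(P_\bullet,R)$, while $\RHom_R(M,N)$ is computed by $\Hom_R(P_\bullet,N)$. The evaluation map $N\otimes_R\Hom_R(P_i,R)\to\Hom_R(P_i,N)$ is an isomorphism for each $i$ because each $P_i$ is finitely generated projective, and since $N$ sits in degree zero each total degree of the two complexes involves only one term, so these termwise isomorphisms assemble into an isomorphism of complexes; no way-out or dévissage along a finite resolution is required. (This is exactly the role of the paper's remark that $C$ is locally a bounded-above complex of finitely generated projectives ``while $X$ is a bounded below complex''.) Your construction of $\s$ and the reduction to affine opens are fine; replacing the perfectness step by the argument above repairs the proof and yields the lemma in the stated generality.
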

\begin{proof}
Given a ring $\La$ and $\La$-modules $M,N$, there is a natural
morphism
\[N\otimes_\La\Hom_\La(M,\La)\lto\Hom_\La(M,N)\]
which is an isomorphism when $M$ is finitely generated
projective. This gives the morphism $\s$ which is an isomorphism
because $C$ is locally isomorphic to a bounded above complex of
finitely generated projective modules (since $\bbX$ is locally
noetherian) while $X$ is a bounded below complex. For the affine case,
see also \cite [Lemma~3.1]{KL2006}.
\end{proof}

\begin{lem}\label{le:gr-duality}
There is a natural isomorphism
\begin{equation*}
  \Hom_k(\RHom_{\mathbf D(\bbX)}(C,X),k)\cong
  \Hom_{\mathbf D(\bbX)}(X,\RHOM_\bbX(\RHOM_\bbX(C,\mathcal
  O_\bbX),f^!k)).
\end{equation*}
\end{lem}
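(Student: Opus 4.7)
The plan is to assemble the isomorphism by chaining together three standard adjunctions: Grothendieck duality (the $\mathbf Rf_*\dashv f^!$ adjunction stated in the excerpt), the rigidity isomorphism of Lemma~\ref{le:rigidity}, and the tensor--hom adjunction in $\bfD(\Qcoh\bbX)$.

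First I would rewrite the left-hand side by noting that, since $\bbY=\Spec k$, global sections of the internal Hom compute derived Hom: that is,
\[\RHom_{\bfD(\bbX)}(C,X)\cong \mathbf Rf_*\,\RHOM_\bbX(C,X)\]
in $\bfD(\Mod k)$. Applying $\Hom_k(-,k)$ and using that $k$ is a field (so any $k$-module is a complex of injective $k$-modules), the Grothendieck duality adjunction $\mathbf Rf_*\dashv f^!$ supplies a natural isomorphism
\[\Hom_k(\mathbf Rf_*\,\RHOM_\bbX(C,X),k)\cong \Hom_{\bfD(\bbX)}(\RHOM_\bbX(C,X),f^!k).\]

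Next, I would apply Lemma~\ref{le:rigidity} to replace the inner $\RHOM_\bbX(C,X)$ by $X\Lotimes_\bbX\RHOM_\bbX(C,\mathcal O_\bbX)$, obtaining
\[\Hom_{\bfD(\bbX)}\bigl(X\Lotimes_\bbX\RHOM_\bbX(C,\mathcal O_\bbX),\,f^!k\bigr).\]
Finally, the usual derived tensor--hom adjunction in $\bfD(\Qcoh\bbX)$ pulls $X$ out of the first argument, turning the expression into
\[\Hom_{\bfD(\bbX)}\bigl(X,\,\RHOM_\bbX(\RHOM_\bbX(C,\mathcal O_\bbX),f^!k)\bigr),\]
which is the right-hand side.

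Naturality in $X$ (and in $C$) is automatic from the naturality of each of the three isomorphisms used. There is no real obstacle here: every ingredient has already been set up in the excerpt, and the only mild subtlety worth checking is that the rigidity isomorphism $\s$ of Lemma~\ref{le:rigidity} is natural in $X$, so that composing with the tensor--hom adjunction yields a bona fide natural transformation rather than merely a pointwise bijection. Once this is observed, the four displayed isomorphisms compose to give the claim.
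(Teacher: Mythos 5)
Your proposal is correct and takes essentially the same route as the paper: the published proof is exactly the four-step chain you describe --- rewrite the left-hand side as $\Hom_{\mathbf D(\bbY)}(\mathbf Rf_*\RHOM_\bbX(C,X),k)$, apply the $\mathbf Rf_*\dashv f^!$ adjunction, substitute via the rigidity isomorphism of Lemma~\ref{le:rigidity}, and conclude with tensor--hom adjunction.
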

\begin{proof}
  Combining Grothendieck duality, Lemma~\ref{le:rigidity}, and
  tensor-hom adjunction, we obtain the following chain of
  isomorphisms:
\begin{align*}
\Hom_k(\RHom_{\mathbf D(\bbX)}(C,X),k)
&\cong\Hom_{\mathbf D(\bbY)}(\mathbf Rf_*\RHOM_\bbX(C,X),k)\\
&\cong\Hom_{\mathbf D(\bbX)}(\RHOM_\bbX(C,X),f^!k)\\
&\cong\Hom_{\mathbf D(\bbX)}(X\Lotimes_{\bbX}\RHOM_\bbX(C,\mathcal O_\bbX),f^!k)\\
&\cong\Hom_{\mathbf D(\bbX)}(X,\RHOM_\bbX(\RHOM_\bbX(C,\mathcal O_\bbX),f^!k)).\qedhere
\end{align*}
\end{proof}

Given a complex $M$ of $\mathcal O_\bbX$-modules and $n\in\bbZ$, let
$\bfi(M)$ denote a K-injective resolution \cite{Sp1988} and set
\[Z^n(M)=\Ker (M^n\to M^{n+1}).\] 

\begin{defn}\label{de:translate-scheme}
  The \emph{Auslander-Reiten translate} of a coherent
  $\mathcal O_\bbX$-module $C$ is the $\mathcal O_\bbX$-module
\[\tau(C,k):=Z^{-1}\bfi (\RHOM_\bbX(\RHOM_\bbX(C,\mathcal
O_\bbX),f^!k)).\]
\end{defn}

The following result shows that this definition is consistent with the
notation from Corollary~\ref{co:base}. In particular, the definition
does not depend on any choice when $\tau(C,k)$ is viewed as an object
of the stable category modulo injectives.

\begin{thm}\label{th:Qcoh}
  Let $\bbX$ be a locally noetherian scheme over a
  field $k$. For a coherent $\mathcal O_\bbX$-module $C$
  there is a natural isomorphism
\begin{align*}
\oHom_{\bbX}(-,\tau(C,k)) &\xto{\sim}\Hom_k(\Ext^1_{\bbX}(C,-),k)
\intertext{and a natural monomorphism} 
\Ext^1_{\bbX}(-,\tau(C,k))&\hookrightarrow \Hom_k(\Hom_{\bbX}(C,-),k)
\end{align*}
which is an isomorphism if and only if $H^0(\RHOM_\bbX(\RHOM_\bbX(C,\mathcal
O_\bbX),f^!k))=0$.
\end{thm}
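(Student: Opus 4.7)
The plan is to transport both sides of the claimed formulas into the derived category $\bfD(\bbX)$ via Lemma~\ref{le:gr-duality}. Write $M=\RHOM_\bbX(\RHOM_\bbX(C,\mathcal O_\bbX),f^!k)$ as in Definition~\ref{de:translate-scheme}. The first step is to apply Lemma~\ref{le:gr-duality} to $X$ and to $X[1]$; combined with the identity $\Hom_{\bfD(k)}(K,k)=\Hom_k(H^0K,k)$ valid for any complex $K$ over the field $k$, this produces natural isomorphisms $\Hom_k(\Hom_\bbX(C,X),k)\cong\Hom_{\bfD(\bbX)}(X,M)$ and $\Hom_k(\Ext^1_\bbX(C,X),k)\cong\Hom_{\bfD(\bbX)}(X,M[-1])$. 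It remains to identify these derived Hom groups with $\oHom_\bbX(X,\tau(C,k))$ and with $\Ext^1_\bbX(X,\tau(C,k))$ respectively.

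For the first identification I would compute $\Hom_{\bfD(\bbX)}(X,M[-1])=H^{-1}\Hom^\bullet_\bbX(X,\bfi(M))$, which by inspection of the two-term piece $\bfi(M)^{-2}\to\bfi(M)^{-1}\to\bfi(M)^0$ equals $\Hom_\bbX(X,\tau(C,k))$ modulo those morphisms $X\to\tau(C,k)$ that factor through $\bfi(M)^{-2}$. The key claim is that this subgroup coincides with the subgroup of morphisms factoring through an \emph{arbitrary} injective, so that the quotient is precisely $\oHom_\bbX(X,\tau(C,k))$. One inclusion is immediate since $\bfi(M)^{-2}$ is injective. For the reverse, given a factorization $X\to J\to\tau(C,k)$ with $J$ injective, the composition $J\to\tau(C,k)\hookrightarrow\bfi(M)^{-1}$ defines a chain map $J[0]\to\bfi(M)[-1]$; its class in $\Hom_{\bfD(\bbX)}(J,M[-1])$ vanishes by the isomorphism from the first paragraph together with $\Ext^1_\bbX(C,J)=0$. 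Hence the chain map is null-homotopic, and the null-homotopy is precisely a lift of $J\to\bfi(M)^{-1}$ through $\bfi(M)^{-2}$.

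For the second identification, set $B^0=\Im d^{-1}_{\bfi(M)}\subseteq Z^0\bfi(M)\subseteq\bfi(M)^0$. The short exact sequence $0\to\tau(C,k)\to\bfi(M)^{-1}\to B^0\to 0$ together with injectivity of $\bfi(M)^{-1}$ presents $\Ext^1_\bbX(X,\tau(C,k))$ as the cokernel of $\Hom_\bbX(X,\bfi(M)^{-1})\to\Hom_\bbX(X,B^0)$. Meanwhile, $\Hom_{\bfD(\bbX)}(X,M)=H^0\Hom^\bullet_\bbX(X,\bfi(M))$ equals the cokernel of the same left-hand group mapping into $\Hom_\bbX(X,Z^0\bfi(M))$, since the image of $d^{-1}$ lies in $B^0\subseteq Z^0\bfi(M)$. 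The inclusion $B^0\hookrightarrow Z^0\bfi(M)$ thus induces the natural monomorphism, which is an isomorphism for every $X$ precisely when $B^0=Z^0\bfi(M)$, equivalently $H^0(M)=0$.

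The main obstacle will be the first identification: \emph{a priori} the subgroup of morphisms $X\to\tau(C,k)$ factoring through some injective is strictly larger than those factoring through the specific injective $\bfi(M)^{-2}$, and forcing the two to agree requires the vanishing of $\Ext^1_\bbX(C,J)$ for injective $J$, which Grothendieck duality (through Lemma~\ref{le:gr-duality}) provides.
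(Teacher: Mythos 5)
Your proposal is correct and follows essentially the same route as the paper: transport everything into $\bfD(\Qcoh\bbX)$ via Lemma~\ref{le:gr-duality} and then compare $\Hom$ into the K-injective resolution $\bfi(M)$ with $\oHom_\bbX(-,Z^{-1}\bfi(M))$ and $\Ext^1_\bbX(-,Z^{-1}\bfi(M))$. The only difference is that you unfold the content of Lemma~\ref{le:cycles} explicitly (including the verification that the boundaries coincide with the maps factoring through an arbitrary injective, which uses $\Ext^1_\bbX(C,J)=0$ exactly as the paper's hypothesis in part (1) of that lemma), whereas the paper simply cites it.
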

\begin{proof}
  Set $\A=\Qcoh\bbX$. The assignment $M\mapsto \bfi(M)$ yields a
  fully faithful and exact functor $\bfD(\A)\to\bfK(\Inj\A)$; see
  \cite{Sp1988}. Thus we can apply Lemma~\ref{le:cycles} and get both
  morphisms from the isomorphism in Lemma~\ref{le:gr-duality}.
\end{proof}

Let $\bbX$ be a non-singular proper scheme of dimension
$d\ge 1$ over a field $k$.  Then the above calculation simplifies
since
\[\RHOM_\bbX(\RHOM_\bbX(C,\mathcal O_\bbX),f^!k)\cong C\Lotimes_\bbX
f^!k\qquad\text{and}\qquad f^!k\cong\omega_\bbX[d]\]
where $\omega_\bbX$ denotes the dualising sheaf.  The first
isomorphism is clear since $C$ is isomorphic to a bounded complex of
locally free sheaves, and for the second isomorphism see
\cite[IV.4]{Ha1966}. Thus
\[\tau(C,k)=Z^{d-1}\bfi (C\otimes_{\bbX}\omega_\bbX).\]
Moreover, the isomorphism in Lemma~\ref{le:gr-duality} gives
\[\Hom_k(\Hom_\bbX(C,-),k)
\cong\Ext^{d}_{\bbX}(-,C\otimes_\bbX\omega_\bbX)\cong
\Ext^{1}_{\bbX}(-, \Si^{d-1}(C\otimes_\bbX\omega_\bbX))\]
where $\Si^{d-1}$ denotes the $(d-1)$st syzygy in a minimal injective
resolution.  This isomorphism is a variation of Serre duality
\cite[Exp.~XII]{Gr1968} and equals the isomorphism from
Theorem~\ref{th:second-main}. In particular, we have
$\uHom_\bbX(C,-)=\Hom_\bbX(C,-)$.

The following is an application of Theorem~\ref{th:second-main}.

\begin{prop}\label{pr:proj-scheme}
  Let $\bbX$ be a non-singular proper scheme of dimension $d\ge 1$
  over a field, and fix a pair of $\mathcal O_\bbX$-modules $C,X$ such
  that $C$ is coherent. Then there exists an epimorphism
  $\pi\colon X_C\to X$ such that for every morphism $\p\colon C\to X$
  the following holds:
\[\p \text{ factors through }\pi\quad\iff
\quad\Ext^1_\bbX(\p,-)=0\quad\iff\quad\p=0.\]
\end{prop}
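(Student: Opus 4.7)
The plan is to apply Theorem~\ref{th:second-main} with $\C=\{C\}$, after verifying its hypothesis that the natural map $\a$ is an isomorphism for every injective $\End_\bbX(C)$-module.

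The first step is this verification. Set $\Ga=\End_\bbX(C)$, which is a finite-dimensional $k$-algebra since $C$ is coherent on a proper $k$-scheme. The Serre duality isomorphism recorded just before the proposition is identified there with the monomorphism $\a$ of Theorem~\ref{th:second-main} in the case $I=\Hom_k(\Ga,k)$. Repeating the Grothendieck duality argument of Lemma~\ref{le:gr-duality} with an arbitrary $k$-vector space $V$ in place of $k$ yields the analogous identification for $I=\Hom_k(\Ga,V)$. Since $\Hom_k(\Ga,k)$ is an injective cogenerator of $\Mod\Ga$, every injective $\Ga$-module is a direct summand of one of the form $\Hom_k(\Ga,V)$, and naturality plus additivity of both sides of $\a$ in $I$ propagates the isomorphism to every injective $I$.

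With this established, the ``only if'' direction of Theorem~\ref{th:second-main} produces, for every $X\in\Qcoh\bbX$, an epimorphism $\pi\colon X_C\to X$ such that a morphism $\p\colon C\to X$ factors through $\pi$ if and only if $\Ext^1_\bbX(\p,-)=0$. This gives the first equivalence of the proposition. For the second equivalence, the identification $\uHom_\bbX(C,-)=\Hom_\bbX(C,-)$ stated in the paragraph preceding the proposition is, by the very definition of the stable category modulo projectives, equivalent to the implication $\Ext^1_\bbX(\p,-)=0\Rightarrow\p=0$ for $\p\colon C\to X$; the reverse implication is trivial.

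The substantive point is the upgrade in the first step: the Serre duality isomorphism gives $\a$ immediately only for the injective envelope $\Hom_k(\Ga,k)$, and one must argue that its cogenerating property together with the additivity of $\a$ in $I$ (via its representability) is enough to cover all injective $\Ga$-modules. Once this is secured, the proposition is a clean consequence of the general machinery of Theorem~\ref{th:second-main} combined with the previously established equality $\uHom_\bbX(C,-)=\Hom_\bbX(C,-)$.
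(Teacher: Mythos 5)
Your proof is correct and follows essentially the same route as the paper, whose proof simply cites the construction of $\pi$ from the proof of Theorem~\ref{th:second-main} after the preceding discussion has identified the monomorphism $\a$ with the Serre duality isomorphism. Your extra step --- upgrading the isomorphism from $I=\Hom_k(\Ga,k)$ to arbitrary injective $\Ga$-modules via the cogenerator property of $\Hom_k(\Ga,-)$ --- is left implicit in the paper but is genuinely needed, since the construction of $\pi$ requires $\a$ to be invertible at the injective envelope of $\Hom_\bbX(C,X)$ over $\Ga$, so making it explicit is a worthwhile addition.
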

\begin{proof}
The construction of $\pi$ is given in the proof of Theorem~\ref{th:second-main}.
\end{proof}

\begin{rem}
  (1) There is a canonical choice for $\pi\colon X_C\to X$;
  see Remark~\ref{re:pi}.
  
  (2) When $X$ is coherent, then there is a choice for
  $\pi\colon X_C\to X$ such that $X_C$ is coherent.

  (3) One may conjecture that the second morphism in
  Theorem~\ref{th:Qcoh} induces an isomorphism
\[\Ext^1_{\bbX}(-,\tau(C,k))\xto{\sim} \Hom_k(\uHom_{\bbX}(C,-),k).\]
\end{rem}

\subsection*{Some questions}

Let $\A$ be a Grothendieck abelian category.

(1) Is there an alternative construction of the Auslander-Reiten
translate $\t_C$ for a finitely presented object $C\in\A$ that is not based
on the existence of flat covers?

(2) Are there examples when the morphism
\[\Ext^1_\A(-,\t_C(I))\lto \Hom_\Ga(\uHom_\A(C,-),I)\] from
Theorem~\ref{th:second-main} is not invertible?

(3) Suppose that $\fp\A$ is $k$-linear and Ext-finite over a field $k$. When is the
Auslander-Reiten translate $\tau(C,k)$ finitely presented for all $C$
in $\fp\A$? And when does the Auslander-Reiten translate induce an
equivalence between the projectively stable and the injectively stable
category associated with $\fp\A$?

An important class where both properties hold are given by categories
$\A$ such that $\fp\A$ is a dualising $k$-variety
\cite{AR1974}. However, weighted projective lines \cite{GL1987} provide interesting
examples where these properties hold but $\fp\A$ is not a dualising
$k$-variety; see also \cite{CL2015,LZ2004}.

(4) Suppose that $\A$ is locally noetherian and that $\fp\A$ is
$k$-linear and Ext-finite over a complete local ring $k$. When does
the Auslander-Reiten translate (via Matlis duality over $k$) induce an
equivalence between the projectively stable category of noetherian
objects and the injectively stable category of artinian objects?

\begin{appendix}
\section{Complexes of injectives and the stable category}

Let $\A$ be an abelian category and let $\Inj\A$ denote the full
subcategory of injective objects.  The chain complexes in $\Inj\A$
with morphisms the chain maps up to homotopy are denoted by
$\bfK(\Inj\A)$.

Suppose that $\A$ has enough injective objects. Then we denote by
\[\bfi\colon \A\lto\bfK(\Inj\A)\]
the fully faithful functor that takes an object in $\A$ to an injective resolution.

For an integer $n$ consider the functor
\[Z^n\colon\bfK(\Inj\A)\lto\overline\A,\qquad X\mapsto\Ker (X^n\to
X^{n+1}).\]
Note that  for  $X\in\bfK(\Inj\A)$ there is a natural morphism
$\bfi Z^0(X)\to X$.

\begin{lem}\label{le:cycles}
 For objects $X\in\A$ and $Y\in\bfK(\Inj\A)$ the following holds.
\begin{enumerate}
\item If $\Hom_{\bfK(\Inj\A)}(-,Y)$ vanishes on complexes concentrated
  in degree zero, then $Z^0$ induces a natural isomorphism
\[\Hom_{\bfK(\Inj\A)}(\mathbf i(X),Y)\xto{\sim}
\oHom_{\A}(X,Z^{0}(Y)).\]
\item There is a natural monomorphism
\[\Ext^1_\A(X,Z^{-1}(Y))\hookrightarrow \Hom_{\bfK(\Inj\A)}(\mathbf i(X),Y)\]
which is an isomorphism for all $X\in\A$ if and only if $H^0(Y)=0$.
\end{enumerate}
\end{lem}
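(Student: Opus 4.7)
My approach to part~(1) is to define $\Psi$ by $f\mapsto f^0|_X$. This lands in $Z^0(Y)$ since $d_Y^0f^0|_X=f^1d_I^0|_X=0$, and if $f=dH+Hd$ is null-homotopic then $f^0|_X=d_Y^{-1}(H^0|_X)$ factors through the injective $Y^{-1}$, so $\Psi$ descends to $\oHom_\A(X,Z^0(Y))$. For surjectivity (which needs no hypothesis), I would extend the composite $X\to Z^0(Y)\hookrightarrow Y^0$ to $f^0\colon I^0\to Y^0$ using injectivity of $Y^0$, and then inductively build $f^{n+1}$ by noting that $d_Y^nf^n$ vanishes on $\Ker d_I^n$ (for $n=0$ since $\bar g$ lands in $Z^0(Y)$, and for $n\ge 1$ because $d_Yd_Y=0$), hence factors through $\Im d_I^n\hookrightarrow I^{n+1}$ and lifts by injectivity of $Y^{n+1}$. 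For injectivity of $\Psi$, the vanishing hypothesis enters as follows: given $f$ with $f^0|_X=\beta\alpha$ through an injective $J$ (say $\alpha\colon X\to J$, $\beta\colon J\to Z^0(Y)$), view $\beta$ as a chain map $J[0]\to Y$ (legitimate since $d_Y^0\beta=0$) and invoke the hypothesis to obtain $h\colon J\to Y^{-1}$ with $d_Y^{-1}h=\beta$. Then $f^0|_X=d_Y^{-1}(h\alpha)$, and an inductive null-homotopy construction---extending $h\alpha$ to $H^0\colon I^0\to Y^{-1}$, replacing $f$ by $f-dH-Hd$ to kill $f^0|_X$, and proceeding degree by degree---finishes the job.

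For part~(2), my construction is as follows. Given $\xi\colon 0\to Z^{-1}(Y)\to E\xto{\pi}X\to 0$, lift the inclusion $Z^{-1}(Y)\hookrightarrow Y^{-1}$ to $g\colon E\to Y^{-1}$ by injectivity of $Y^{-1}$. Since $d_Y^{-1}g\colon E\to Y^0$ vanishes on $Z^{-1}(Y)$, it factors as $\bar g\circ\pi$ with $\bar g\colon X\to Z^0(Y)$, and extending $\bar g$ to a chain map $\bfi(X)\to Y$ by part~(1)'s recipe produces the image of $\xi$. Two lifts of the inclusion differ by $h\pi$ for some $h\colon X\to Y^{-1}$, altering $\bar g$ by $d_Y^{-1}h$; the corresponding difference chain map can be taken as $(d_Y^{-1}H^0,0,0,\ldots)$ with $H^0$ extending $h$, which is null-homotopic via $(H^0,0,0,\ldots)$. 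Combined with the fact that two chain maps agreeing on $Z^0|_X$ as actual maps are always homotopic (apply part~(1)'s null-homotopy construction to their difference with $h=0$), this yields a well-defined class in $\Hom_{\bfK(\Inj\A)}(\bfi(X),Y)$. For the monomorphism claim, if the output is null-homotopic then $\bar g=d_Y^{-1}h$ for some $h$, so $g-h\pi\colon E\to Y^{-1}$ lands in $Z^{-1}(Y)$ and restricts to the identity there, splitting $\xi$.

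For the iff statement, I would exploit the natural isomorphism $\Hom_{\bfK(\Inj\A)}(\bfi(X),Y)\cong\Hom_\A(X,Z^0(Y))/\Im(d_Y^{-1}\circ-)$ that part~(1)'s method supplies unconditionally (quotienting only by morphisms literally of the form $d_Y^{-1}h$). Under this identification, the image of $\Ext^1_\A(X,Z^{-1}(Y))$ consists precisely of those classes represented by morphisms $X\to Z^0(Y)$ factoring through $\Im d_Y^{-1}\hookrightarrow Z^0(Y)$: the forward inclusion holds because $\bar g=d_Y^{-1}g$ takes values in $\Im d_Y^{-1}$, the reverse by pullback along $Y^{-1}\twoheadrightarrow\Im d_Y^{-1}$. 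The map is therefore surjective for all $X$ iff every $\bar g\colon X\to Z^0(Y)$ factors through $\Im d_Y^{-1}$; the test case $X=Z^0(Y)$, $\bar g=\id$ shows this is equivalent to $H^0(Y)=Z^0(Y)/\Im d_Y^{-1}=0$, which conversely makes the factorisation vacuous. The main technical hazard throughout part~(2) is to distinguish the intermediate quotient $\Hom_\A/\Im(d_Y^{-1}\circ-)$ from $\oHom_\A$, since they agree only under the hypothesis of part~(1).
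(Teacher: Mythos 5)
Your proof is correct and fills in, at the chain level, exactly what the paper dismisses as ``straightforward'': in particular your explicit extension-to-chain-map construction in part~(2) is the unwinding of the paper's composite $\Ext^1_\A(X,Z^{-1}(Y))\xto{\sim}\Hom_{\bfK(\Inj\A)}(\bfi(X),\bfi Z^{-1}(Y)[1])\to\Hom_{\bfK(\Inj\A)}(\bfi(X),Y)$, and your unconditional intermediate isomorphism $\Hom_{\bfK(\Inj\A)}(\bfi(X),Y)\cong\Hom_\A(X,Z^0(Y))/\Im(d_Y^{-1}\circ-)$ is a sound device for separating the hypothesis-free statements from the two ``iff'' criteria. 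The only points left implicit are routine: that the map in (2) is additive (compatible with Baer sums) and that equivalent extensions yield homotopic chain maps, both of which follow from your construction or from its identification with the composite above.
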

\begin{proof}
  The proof is straightforward. The second morphism is the composition
  of
\[\Ext^1_\A(X,Z^{-1}(Y))\xto{\sim}\Hom_{\bfK(\Inj\A)}(\bfi(X),\bfi Z^{-1}(Y)[1])\]
and the morphism induced by $\bfi Z^{-1}(Y)[1]\to Y$.
\end{proof}

\section{Natural maps of extension functors}\label{ap:ext}

We rewrite the isomorphisms \eqref{eq:repres} and
\eqref{eq:proj-repres} from the introduction in terms of natural
transformations between extension functors. This is based on
Lemma~\ref{le:hom-ext} and reveals the symmetry between both formulas.

Let $\A$ be a Grothendieck abelian category that is locally finitely
presented. Fix a finitely presented object $C$ and and injective
module $I$ over $\Ga=\End_\A(C)$. Then for $X\in\A$ there is a natural
isomorphism
\begin{align*}
\Hom_\Ga(\Hom(\Hom_\A(X,-),\Ext^1_\A(C,-)),I)\cong\Hom(\Ext^1_\A(-,X),\Ext^1_\A(-,\t_C(I))).
\intertext{When $\A$ has enough projective morphisms we have also}
\Hom_\Ga(\Hom(\Ext^1_\A(X,-),\Ext^1_\A(C,-)),I)\cong\Hom(\Hom_\A(-,X),\Ext^1_\A(-,\t_C(I))).
\end{align*}

\end{appendix}

\subsection*{Acknowledgements}
I am grateful to Amnon Neeman for his help with the algebraic geometry
in this paper.

\end{document}